\title{Optimal bounds for the growth of Sobolev norms of solutions of a quadratic Szeg\H{o} equation}
\author{Joseph \bsc{Thirouin}}
\date{October 3\up{rd}, 2017}
\DeclareMathOperator{\im}{Im}
\DeclareMathOperator{\re}{Re}
\DeclareMathOperator{\tr}{Tr}
\DeclareMathOperator{\rg}{rg}
\newtheorem{thm}{Theorem}
\newtheorem{prop}{Proposition}[section]
\newtheorem{lemme}[prop]{Lemma}
\newtheorem{cor}[prop]{Corollary}
\theoremstyle{definition}
\newtheorem*{déf}{Definition}
\theoremstyle{remark}
\newtheorem{rem}{Remark}
\begin{document}

\renewcommand{\refname}{Bibliography}
\renewcommand{\abstractname}{Abstract}
\maketitle

\begin{abstract}
In this paper, we study a quadratic equation on the one-dimensional torus :
\[i \partial_t u = 2J\Pi(|u|^2)+\bar{J}u^2, \quad u(0, \cdot)=u_0,\]
where $J=\int_\mathbb{T}|u|^2u \in\mathbb{C}$ has constant modulus, and $\Pi$ is the Szeg\H{o} projector onto functions with nonnegative frequencies. Thanks to a Lax pair structure, we construct a flow on $BMO(\mathbb{T})\cap \im\Pi$ which propagates $H^s$ regularity for any $s>0$, whereas the energy level corresponds to $s=1/2$. Then, for each $s>1/2$, we exhibit solutions whose $H^s$ norm goes to $+\infty$ exponentially fast, and we show that this growth is optimal.\par
\textbf{MSC 2010 :} 37K10, 37K40, 35B45.\par
\textbf{Keywords :} Hamiltonian systems, Szeg\H{o} equation, a priori estimates, Lax pair.
\end{abstract}

\section{Introduction}
Quantifying the growth of Sobolev norms of solutions to a Hamiltonian PDE is a good way of understanding how fast the energy moves from lower to higher frequencies and conversely, a phenomenon which is typical of what is known as \emph{wave turbulence}. Upper bounds on such a growth are usually known \cite{Bourgain, planchon, Staffilani, thirouin1}, but it is very difficult to find out whether they are optimal or not, unless explicit growing solution are known.

A setting combining Hamiltonian properties, explicit computations, and growth of Sobolev norms has been succesfully introduced in 2010 by Gérard and Grellier \cite{ann}. The authors designed a simple toy model, called the \emph{cubic Szeg\H{o} equation}, which turns out to be completely integrable, in the sense that action-angle variables can be found, making computations possible (though not obvious). On the other hand, this system discloses instability, and it is possible to prove the existence of generic turbulent orbits, with Sobolev norms growing more than polynomially in time and oscillating back and forth \cite{livrePG}. This toy model eventually enlightens the large-time behaviour of non-integrable hamiltonian systems, for which it enables to find turbulent solutions through a study of resonances \cite{Xu2} (see also \cite{hani, HPTV} for the same ideas in the case of the nonlinear Schrödinger equation on $\mathbb{T}^2$). Note that the cubic Szeg\H{o} equation also looks similar to physically relevant equations, such as the one studied in \cite{Bizon}. However, in all these situations, the optimality of the bounds on the growth of solutions remains to be established.

As observed in \cite{thirouin1}, a priori estimates are much simpler to prove when nonlinearities are only quadratic. This gives the idea of exploiting the framework of the cubic Szeg\H{o} equation in an apparently simpler case, where we only take a \emph{cubic} Hamiltonian (instead of quartic). More specifically, we choose our phase space to be the closed subset of $L^2(\mathbb{T})$, called $L^2_+(\mathbb{T})$, consisting of all functions with only nonnegative frequencies :
\[ L^2_+(\mathbb{T}) = \left\lbrace u\in L^2(\mathbb{T}) \: \middle| \: \hat{u}(k)=0, \:\forall k<0\right\rbrace . \]
$L^2$ is endowed with its standard Hilbert structure :
\[ (f\vert g):=\frac{1}{2\pi}\int_0^{2\pi}f(e^{ix})\overline{g(e^{ix})}dx.\]
We denote by $\Pi : L^2(\mathbb{T})\to L^2_+(\mathbb{T})$ the orthogonal projection onto $L^2_+$. $\Pi$ is usually called the \emph{Szeg\H{o} projector}. For any subspace $G$ of $L^2(\mathbb{T})$, we will use the notation $G_+:=G\cap L^2_+$.

\begin{rem}
The space $L^2_+(\mathbb{T})$ is also called the Hardy space on the disc, and can be identified with the space of holomorphic functions on the open unit disc $\mathbb{D}\subset\mathbb{C}$ whose trace on the boundary $\partial \mathbb{D}$ is in $L^2$.
\[ u(x)=\sum_{n=0}^\infty \hat{u}(n)e^{inx}\ \stackrel{\sim }{\longmapsto} \ u(z)=\sum_{n=0}^\infty \hat{u}(n)z^n, \quad \lim_{r\to 1^-} \frac{1}{2\pi}\int_0^{2\pi}|u(re^{i\theta})|^2d\theta <\infty.\]
In the sequel, we shall use either notation to designate the points of our phase space.
\end{rem}

On a dense subset of $L^2_+$, we can define a functional $E$, that will be taken as our Hamiltonian (our energy) :
\[ E(u):= \frac{1}{2}\left| \int_\mathbb{T} |u|^2u \right| ^2 = \frac{1}{2}|(u^2\vert u)|^2,\]
With respect to the natural symplectic structure given by the 2-form $\omega (u,v) := \im (u\vert v)$, the Hamiltonian equation deriving from $E$ reads $\partial_t u=-i\nabla E(u)$, \textit{i.e.}
\begin{equation*}
i\partial_tu=2(u^2|u)\Pi(|u|^2)+(u|u^2)u^2,
\end{equation*}
Let us here simplify the notation, calling $J$ the factor $(u^2|u)$ (as a reminiscence of the $J_3$ of Szeg\H{o} hierarchy introduced in \cite{ann}), so that $E=\frac{1}{2}|J|^2$ and the evolution of $u$ is given by
\begin{equation}\label{quad}
i\partial_t u=2J\Pi(|u|^2)+\bar{J}u^2.
\end{equation}

Because of the invariances of the Hamiltonian $E$, we know that there are at least three conservation laws for smooth solutions of \eqref{quad} :
\begin{align*}
Q(u) &:= (u|u), & \text{(the mass)}\\
M(u) &:= (Du|u), \: D:=-i\partial_x, & \text{(the momentum)}\\
E(u) &= \tfrac{1}{2}|(u^2\vert u)|^2=\tfrac{1}{2}|J|^2. & \text{(the energy)}
\end{align*}
In particular, the modulus of $J$ is conserved by the flow, which makes system \eqref{quad} look like a ``quadratic Szeg\H{o} equation''. In fact, we will show the existence of infinitely many conservation laws for equation \eqref{quad}, proving that it is associated with a Lax pair structure (see theorem \ref{lax_thm} below).

Notice also that the $H^{1/2}$ norm of a smooth solution remains bounded, since for $u\in L^2_+$,
\[ (Q+M)(u)=\sum_{n\geq 0}(1+n)|\hat{u}(n)|^2\simeq \|u\|_{H^{1/2}}^2.\]
This is what makes $H^{1/2}_+(\mathbb{T})$ a natural space to define a flow. We will call it the \emph{energy space}.

However, it turns out that we can even define solutions below the $H^{1/2}$ regularity, which is consistent with the fact that equation \eqref{quad} does not involve any derivative in the $x$-variable. Following the approach of \cite{GKoch}, and using the Lax pair structure, we will prove that \eqref{quad} admits a flow on $BMO_+(\mathbb{T}):=BMO\cap L^2_+(\mathbb{T})$, where $BMO$ denotes the usual space of John and Nirenberg of functions such that
\[ \sup_{I\subset \mathbb{T}}\: \frac{1}{|I|}\int_I \left| f-\frac{1}{|I|}\int_I f\right| <+\infty,\]
where the supremum is taken on intervals $I$ of $\mathbb{T}$.

\vspace*{1em}
The main theorem of this paper is the following :
\begin{thm}\label{main}
Let $s\geq 0$ be any nonnegative real number. Let $u_0\in BMO_+\cap H^s_+(\mathbb{T})$. 
\begin{enumerate}
\item Then there exists a unique $u\in C(\mathbb{R},H^s_+)\cap C_{w*}(\mathbb{R},BMO_+)$ solution to
\[ i\partial_tu=2J\Pi(|u|^2)+\overline{J}u^2,\quad u(0)=u_0.\]
This solution stays bounded in the $BMO$ norm, and (when $s\geq \frac{1}{2}$) in the $H^{1/2}$ norm.

In addition, for each $t\in \mathbb{R}$, the mapping $u_0\mapsto u(t)$ is Lipschitz continuous on the ball $\mathcal{B}_{BMO}(R):=\{v\in BMO_+(\mathbb{T})\mid \|v\|_{BMO}\leq R\}$ endowed with the $L^2$ distance.

\item Let $s'\in (0,s]$ (if $s<\frac{1}{2}$), or $s'\in (\frac{1}{2},s]$ (if $s\geq \frac{1}{2}$). Then there are constants $C, B>0$ such that
\[ \forall t\in\mathbb{R},\quad \|u(t)\|_{H^{s'}}\leq Ce^{B|t|}.\]
$B$ can be taken to depend only on $s'$ and on $\|u_0\|_{BMO}$ (when $s'<1$), on $\|u_0\|_{H^{1/2}}$ (when $s'=1$) or on $\|u_0\|_{H^{s'}}$ (when $s'>1$).

Moreover, in the case when $s\geq \frac{1}{2}$, these estimates are \emph{optimal}, \emph{i.e.} for each $s\geq\frac{1}{2}$, there exists $u_0\in BMO_+\cap H^s$ such that $\|u(t)\|_{H^{s'}}$ grows exponentially fast for each $s'\in (\frac{1}{2},s]$.
\end{enumerate}
\end{thm}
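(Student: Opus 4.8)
The plan is to prove part~(1) by a compactness-and-uniqueness argument in the spirit of G\'erard--Koch \cite{GKoch}, built on the Lax pair of Theorem~\ref{lax_thm}, and part~(2) by energy estimates on $\|u(t)\|_{H^{s'}}$ for the upper bound, together with an explicit integration of \eqref{quad} on a finite-dimensional invariant manifold for the matching lower bound. For smooth data $u_0\in H^\sigma_+$ with $\sigma$ large, local existence is routine: the right-hand side of \eqref{quad} is a locally Lipschitz polynomial map $H^\sigma_+\to H^\sigma_+$ ($H^\sigma$ being an algebra and $\Pi$ bounded), so a fixed-point argument applies. The crucial a priori bound comes from the Lax pair: the spectrum of the Hankel-type Lax operator $H_{u(t)}$ is conserved, and by Nehari's theorem $\|H_u\|_{\mathcal L(L^2)}$ is comparable to the $BMO$ seminorm of $u$; this controls the $BMO$ seminorm of $u(t)$, and together with conservation of the mass $Q$ it bounds all the norms $\|u(t)\|_{L^p}$, $p<\infty$, via John--Nirenberg, hence in particular $|J|=|(u^2\vert u)|\le\|u\|_{L^3}^3$. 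A standard $H^\sigma$ energy estimate then closes by Gronwall and yields a global smooth flow. To reach $u_0\in BMO_+\cap H^s_+$, one approximates it by smooth data $u_0^{(n)}\to u_0$ in $H^s$ with $\sup_n\|u_0^{(n)}\|_{BMO}\lesssim\|u_0\|_{BMO}$; the corresponding solutions are uniformly bounded in $L^\infty_t BMO_+$ and, on compact intervals, in $H^s_+$, so a weak-$\ast$ compactness argument produces a solution $u\in C_{w\ast}(\mathbb R,BMO_+)\cap L^\infty_{\mathrm{loc}}(\mathbb R,H^s_+)$ inheriting all the quantitative bounds, and strong continuity $u\in C(\mathbb R,H^s_+)$ follows from the equation, the weak continuity and the energy identity for $\|u(t)\|_{H^s}^2$.

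Uniqueness, the Lipschitz property, and the upper bound all rest on energy identities. For uniqueness and the Lipschitz bound, take two solutions $u,v$ with $\|u\|_{BMO},\|v\|_{BMO}\le R$ and bounded mass and set $w=u-v$; expanding every trilinear difference appearing in the equation for $w$ (including $J_u-J_v$) and using the $BMO$-controlled $L^p$ norms gives $\frac{d}{dt}\|w(t)\|_{L^2}^2\le C(R)\|w(t)\|_{L^2}^2$, whence $\|w(t)\|_{L^2}\le e^{C(R)|t|}\|w(0)\|_{L^2}$. For the upper bound I would differentiate, along \eqref{quad},
\[ \frac{d}{dt}\|u(t)\|_{H^{s'}}^2 = 2\,\im\!\Big[\,2J\big((1+D)^{2s'}\Pi(|u|^2)\,\big\vert\,u\big)+\bar J\big((1+D)^{2s'}u^2\,\big\vert\,u\big)\Big]. \]
Each bracket is a trilinear form in $(\hat u(k))_k$ in which the frequency carrying the weight $(1+\cdot)^{2s'}$ is the sum of the other two (modulo conjugations); after symmetrizing, the genuinely top-order part is real and cancels under $\im$, leaving commutator terms that lose strictly less than one derivative relative to $H^{s'}$. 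For $s'<1$ these are bounded by $C(s')\,|J|\,(\|u\|_{BMO}+\|u\|_{L^2})\,\|u\|_{H^{s'}}^2$, and since $|J|$, $\|u\|_{BMO}$ and $\|u\|_{L^2}$ are all controlled by $\|u_0\|_{BMO}$, Gronwall gives $\|u(t)\|_{H^{s'}}\le Ce^{B|t|}$ with $B=B(s',\|u_0\|_{BMO})$. The endpoint $s'=1$ is borderline — the commutator against a $BMO$ symbol loses a logarithm — and is handled by a logarithmic Gronwall inequality fed by conservation of $Q+M\simeq\|u\|_{H^{1/2}}^2$, so that $B=B(\|u_0\|_{H^{1/2}})$; for $s'>1$ the single free derivative no longer suffices, and one either invokes the higher conservation laws provided by the Lax pair or accepts a Gronwall constant $B=B(s',\|u_0\|_{H^{s'}})$, which is what the statement allows.

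For the matching lower bound I would restrict \eqref{quad} to the finite-dimensional manifold of rational functions $P(z)/Q(z)$ of fixed degree $d$ — invariant because the Lax pair preserves the rank of $H_u$ — on which the equation becomes a completely integrable finite-dimensional Hamiltonian system with action--angle coordinates. A single pole, $u=c/(1-pz)$, merely rotates ($|p|$ and $|c|$ stay constant), consistently with conservation of $\|u\|_{H^{1/2}}$, so the construction must use $d\ge 2$: with the ansatz $u=(a+bz)/\big((1-pz)(1-qz)\big)$ one reduces \eqref{quad} to ODEs for $(a,b,p,q)$, integrates them, and exhibits a solution along which one pole escapes to the unit circle, $1-|q(t)|\asymp e^{-\alpha|t|}$, while conservation of $\|u\|_{H^{1/2}}$ forces the corresponding residue to vanish at the same rate. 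A direct computation then gives $\|u(t)\|_{H^{s'}}\asymp\big(1-|q(t)|\big)^{-(s'-\frac12)}\asymp e^{\alpha(s'-\frac12)|t|}$ for every $s'\in(\tfrac12,s]$; since this initial datum is rational with poles outside $\overline{\mathbb D}$ it belongs to $H^s$ for all $s$, and comparing with the upper bound of the previous paragraph shows that the growth rate is optimal.

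The main difficulty lies in the optimality step: one must exhibit an ansatz genuinely preserved by \eqref{quad}, integrate the reduced system in closed form, and prove that a pole approaches $\partial\mathbb D$ at a purely exponential rate — neither faster nor slower — which requires exploiting the action--angle structure of the quadratic Szeg\H{o} equation in full. A secondary difficulty is pinning down the sharp dependence of $B$ in the borderline regimes $s'\ge 1$, where the naive $H^{s'}$ energy estimate barely fails to close and must be supplemented either by a logarithmic refinement or by the conservation laws coming from the Lax pair.
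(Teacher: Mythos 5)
Your architecture (Lax pair $\Rightarrow$ $BMO$ control $\Rightarrow$ $L^2$ contraction $\Rightarrow$ compactness for part (1); energy estimates plus an invariant rational manifold for part (2)) matches the paper's, but several load-bearing steps are either wrong in detail or left as assertions. First, the conserved spectral data is that of the \emph{shifted} Hankel operator $K_u=S^*H_u$, not of $H_u$: on the torus $H_u$ picks up a rank-one correction $i\bar J(u\vert\cdot)u$ in its evolution and its spectrum is \emph{not} conserved; the $BMO$ bound then follows only up to the additive error $\|u_0\|_{L^2}^2$, via $\|H_uh\|_{L^2}^2=\|K_uh\|_{L^2}^2+|(u\vert h)|^2$. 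Second, your entire upper bound for $s'<1$ rests on the claim that, after symmetrization, the top-order part of $\frac{d}{dt}\|u\|_{H^{s'}}^2$ cancels under $\im$ and the remainder is a commutator controlled by the $BMO$ norm; this is precisely the hard analytic content and you give no proof. At $s'=1$ your own accounting admits a logarithmic loss, and a Gronwall inequality of the form $f'\leq Cf\sqrt{\log f}$ yields $e^{Bt^2}$, not the claimed $e^{B|t|}$ (the paper's Lemma \ref{h1} avoids any loss: the $|Du|^2$ term drops out under $\im$ and the rest is bounded by $|J|\,\|H_u\|\,\|Du\|_{L^2}^2\leq\sqrt{2E}\|u\|_{H^{1/2}}\|Du\|_{L^2}^2$). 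At $s'>1$, ``accepting $B=B(\|u_0\|_{H^{s'}})$'' does not close the naive energy estimate either, since that estimate needs a uniform-in-time $L^\infty$ bound, which the paper extracts from conservation of $\tr|K_u|$ together with Peller's theorem and $B^1_{1,1}\hookrightarrow W$. The paper's actual mechanism for $s'\in(0,1)$ is entirely different and sidesteps all commutator analysis: the $L^2$ Lipschitz estimate \eqref{l2-lip} (whose constant depends only on $BMO$ norms because the self-adjoint Toeplitz part of $dF$ is killed by $\im$) is applied to $u$ and its translates $\tau_yu$, and the characterization $\|v\|_{H^{s'}}^2\simeq\|v\|_{L^2}^2+\int_0^1\|v-\tau_yv\|_{L^2}^2|y|^{-1-2s'}\,dy$ converts it into the exponential bound. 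This same device is what propagates $H^s$ regularity for $0<s<1/2$ from $BMO$ data --- a uniform bound your compactness argument simply assumes when it asserts the approximating solutions are bounded ``on compact intervals in $H^s_+$''.

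The optimality step is where the gap is most serious. Since rank here means rank of $K_u$, the rank-one manifold is $\mathcal{L}(1)=\{b+cz/(1-pz)\}$, a \emph{constant plus a single pole}, and it is the constant term $b$ that makes turbulence possible; your correct observation that a pure one-pole profile only rotates leads you to a two-pole ansatz living in $\mathcal{L}(2)$, where the reduced system is higher-dimensional and you do not integrate it. The paper stays in $\mathcal{L}(1)$, derives from the $(b,c,p)$ system a closed scalar ODE $\bigl(\frac{1}{|c|}\frac{d|c|}{dt}\bigr)^2=\mathcal{P}(|c|\sqrt{M})$ with $\mathcal{P}$ an explicit quadratic built from $Q$ and $M$, integrates it in closed form, and obtains $|c(t)|\sim Ce^{-\kappa|t|}$ with $\kappa=Q^{3/2}\sqrt{4M-Q}$ \emph{exactly when} the initial datum satisfies the resonance condition $E=\frac12Q^3$. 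Without identifying this necessary and sufficient condition you cannot exhibit a growing solution at all, since generic data on these manifolds stay bounded in every $H^s$; your proposal flags this as the main difficulty but leaves it entirely open, and moreover the homogeneity of $E=\frac12Q^3$ is what lets one produce such data at every regularity $s\geq\frac12$ as required by the theorem.
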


Note that when $s\geq \frac{1}{2}$, $H^s_+(\mathbb{T})\subset BMO_+(\mathbb{T})$. In particular, Theorem \ref{main} yields that equation \eqref{quad} is well-posed in the energy space.

Furthermore, when $s=0$, $BMO_+\cap H^0(\mathbb{T})=BMO_+(\mathbb{T})$, hence the above theorem states the existence of a flow on $BMO_+$ which propagates additional regularity. This phenomenon strongly hinges on the quadratic nature of equation \eqref{quad}. As shown in \cite{GKoch}, it is also possible to define a flow on $BMO_+$ for the cubic Szeg\H{o} equation, but in that case it is not known whether it preserves $H^s$ regularity or not, when $0<s<1/2$.

\vspace*{1em}
Let us turn to the optimality of the a priori estimates. As we mentioned before, the existence of turbulent trajectories can be shown thanks to explicit computations relying on the integrability of the equation. Indeed, a consequence of the Lax pair structure will be that the set
\[ \mathcal{L}(1):=\left\lbrace u(x)=b+\frac{ce^{ix}}{1-pe^{ix}}\, \middle| \, b,c,p\in\mathbb{C},|p|<1\right\rbrace \]
is stable by the flow of \eqref{quad}. On this set, \eqref{quad} will reduce to a system of couple ODEs, from which we will be able to derive a necessary and sufficient condition for norm explosion. We will prove the following proposition :

\begin{prop}\label{blowup}
Suppose that $u$ is a non-constant solution of \eqref{quad} such that $u(0)=u_0\in \mathcal{L}(1)$. Then the following statements are equivalent :
\begin{enumerate}
\item There exists $s>\frac{1}{2}$ such that $\|u(t)\|_{H^s}$ is unbounded as $t\to \pm \infty$.
\item For \emph{all} $s>\frac{1}{2}$, there exists $C_s,B_s>0$ such that $\|u(t)\|_{H^s}\sim_{t\to \pm\infty} C_se^{B_s|t|}$.
\item The energy and the mass of $u_0$ satisfy the relation
\begin{equation}\label{res}
E=\frac{1}{2}Q^3.
\end{equation}
\end{enumerate}
\end{prop}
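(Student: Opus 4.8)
The plan is to exploit the rational parametrization of $\mathcal{L}(1)$ to reduce \eqref{quad}, on this invariant set, to a single scalar ODE, and then to read off the three equivalent conditions from its phase portrait. Writing $u=b+ce^{ix}/(1-pe^{ix})$, one has $\hat u(0)=b$ and $\hat u(n)=cp^{n-1}$ for $n\geq 1$; summing the corresponding series gives
\[ Q=|b|^2+\frac{|c|^2}{1-|p|^2},\qquad M=\frac{|c|^2}{(1-|p|^2)^2},\]
hence the key relations $1-|p|^2=(Q-|b|^2)/M$ and $|c|^2=(Q-|b|^2)^2/M$: on a level set of $(Q,M)$, the single real number $|b|^2$ controls both $|p|$ and $|c|$. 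A slightly longer computation of $(u^2\vert u)$ yields $J=b(2Q-|b|^2)+Mc\bar p$. Finally, summing $\sum_{n\geq 1}(1+n^2)^s|c|^2|p|^{2(n-1)}$ and using the relations above, one gets, for each $s>\tfrac12$ (with implied constants depending on $s$),
\[ \|u\|_{H^s}^2\simeq |b|^2+M(1-|p|^2)^{1-2s}=|b|^2+M^{2s}(Q-|b|^2)^{1-2s}\]
uniformly, so in particular $\|u\|_{H^s}^2\sim M^{2s}(Q-|b|^2)^{1-2s}$ as $|b|^2\to Q$ (and $\simeq M$ when $s=\tfrac12$, which re-proves boundedness of the energy norm). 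Thus $\|u(t)\|_{H^s}$ is unbounded for some $s>\tfrac12$ if and only if $|b(t)|^2\to Q$ along a sequence of times, if and only if this happens for every $s>\tfrac12$ simultaneously, the rate being governed by $Q-|b(t)|^2$.

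Next I would derive the scalar ODE. Since $\mathcal{L}(1)$ is stable under the flow of \eqref{quad}, the coefficients $b(t),c(t),p(t)$ stay defined, and projecting \eqref{quad} onto the zeroth Fourier mode gives $i\dot b=2JQ+\bar J b^2$. Setting $y:=|b|^2$ and using $J=b(2Q-y)+Mc\bar p$, one computes $\dot y=2(2Q-y)\im(J\bar b)=2M(2Q-y)\,\im(c\bar b\bar p)$. Now $|c\bar b\bar p|^2=y\,|c|^2|p|^2$ is an explicit function of $y$ by the relations above, while $\re(c\bar b\bar p)$ is pinned down by expanding the conserved quantity $2Ey=|J\bar b|^2=\bigl(y(2Q-y)+M\re(c\bar b\bar p)\bigr)^2+M^2\,\im(c\bar b\bar p)^2$. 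Eliminating the phase yields $\dot y^2=P(y)$ with
\[ P(y)=4y(2Q-y)^2(Q-y)^2(M-Q+y)-\bigl(2E-y(2Q-y)^2-(Q-y)^2(M-Q+y)\bigr)^2,\]
a polynomial whose coefficients depend only on $Q,M,E$, and along the orbit $0\leq y< Q$.

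Then I would read off the three conditions. (ii)$\Rightarrow$(i) is trivial. For (i)$\Rightarrow$(iii): if $\|u(t)\|_{H^s}$ is unbounded for some $s>\tfrac12$, the first step forces $y(t_n)\to Q$ along some $t_n\to\pm\infty$; since $P(y(t))=\dot y(t)^2\geq 0$ and $y\leq Q$ throughout, continuity gives $P(Q)\geq 0$. But $P(Q)=-(2E-Q^3)^2\leq 0$, so $2E=Q^3$, which is exactly \eqref{res}. For (iii)$\Rightarrow$(ii): when $2E=Q^3$ one checks that $Q$ is a \emph{double} root of $P$, with $P(y)=Q^3(4M-Q)(Q-y)^2(1+o(1))$ near $y=Q$; one verifies $4M>Q$ for every non-constant orbit on this level set, and that the range of $y$ along the orbit reaches $Q$ (as its upper turning point) and not merely a smaller root. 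Then $\dot y=\pm\sqrt{Q^3(4M-Q)}\,(Q-y)(1+o(1))$, so $Q-y(t)\sim C e^{-\lambda|t|}$ as $t\to\pm\infty$ with $\lambda:=\sqrt{Q^3(4M-Q)}>0$; feeding this into the $H^s$ asymptotics above gives $\|u(t)\|_{H^s}\sim C_s e^{B_s|t|}$ with $B_s=(s-\tfrac12)\lambda$, for every $s>\tfrac12$ at once.

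The delicate part — the main obstacle — is the root analysis of $P$ on $[0,Q]$ when $2E=Q^3$: showing that $Q$ is exactly a double root, that $P''(Q)>0$, i.e. $4M>Q$, for every non-constant $u_0\in\mathcal L(1)$ on this level (the relations above together with $|J|^2=Q^3$ should force this, while the case $4M\leq Q$ must be excluded from the level set entirely), and that the non-constant orbit genuinely asymptotes to $y=Q$ rather than oscillating between two interior roots of $P$. This amounts to an explicit factorization of the degree-six polynomial $P$, using the constraint $\max(0,Q-M)\leq y<Q$ and the relations between $Q$, $M$ and $E$; this is where the argument requires the most care.
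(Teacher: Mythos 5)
Your setup coincides with the paper's: parametrize $\mathcal{L}(1)$ by $(b,c,p)$, express $Q$, $M$, $E$, $J$ in these coordinates, and reduce \eqref{quad} on this invariant set to a scalar ODE $\dot y^2=P(y)$ for a single real quantity (your $y=|b|^2$ equals $Q-f$ with $f=|c|\sqrt{M}$, which is the variable the paper integrates, so the two ODEs are affinely equivalent). Your formulas for $Q$, $M$, $J$ and for $\|u\|_{H^s}^2$ are correct, and your argument for (i)$\Rightarrow$(iii) --- continuity of $P$ along the orbit together with $P(Q)=-(2E-Q^3)^2\leq 0$ --- is sound, and arguably slicker than the paper's direct passage to the limit in the three conservation laws.

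However, the implication (iii)$\Rightarrow$(ii) is not proved: you list exactly what must be checked (that $y=Q$ is a double root with $P''(Q)/2=Q^3(4M-Q)$, that $4M>Q$ on the level set, that the orbit genuinely tends to $y=Q$ rather than oscillating between interior roots or sitting at an equilibrium) and then defer it as ``where the argument requires the most care.'' This is the heart of the proposition, and it is not automatic. The paper settles it by an explicit factorization: under \eqref{res} one has $P(y)=(Q-y)^2\mathcal{P}(Q-y)$ with $\mathcal{P}(X)=-(M+Q)^2X^2+2Q^2(M-Q)X+Q^3(4M-Q)$, a concave quadratic with positive discriminant. Since $\mathcal{P}$ must be nonnegative at the strictly positive value $X=Q-y$ actually attained by the orbit ($|c|>0$ on $\mathcal{L}(1)$), its larger root $r_+$ is positive, which after a short computation is \emph{equivalent} to $Q<4M$; then $r_+r_-=-Q^3(4M-Q)(M+Q)^{-2}<0$ forces $r_-<0$, so $\mathcal{P}>0$ on $[0,r_+)$ and the only turning point available to $Q-y$ is $r_+$ itself; a second-derivative computation at $r_+$ excludes the equilibrium there, so $Q-y(t)$ eventually decreases, and explicit integration of the separable ODE gives $Q-y(t)\sim Ce^{-\kappa|t|}$ with $\kappa=Q^{3/2}\sqrt{4M-Q}$, whence $B_s=(s-\tfrac{1}{2})\kappa$. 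Ruling out bounded non-constant orbits on the resonant level set (which would falsify (ii) while (iii) holds) is precisely what this root analysis buys, so as written your proposal establishes only one of the two nontrivial implications.
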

This proposition, which will prove the last part of theorem \ref{main}, calls for several comments :
\begin{itemize}
\item On $\mathcal{L}(1)$, there is a dichotomy for solutions of \eqref{quad} : either $u$ remains bounded in every $H^s$, $s\geq 0$, either it blows up in each $H^s$ topology, $s>\frac{1}{2}$, at an exponential rate (but remaining bounded in $H^{1/2}$ and below, of course, since $\mathcal{L}(1)$ is made of $C^\infty$ functions). The question whether such a dichotomy remains true for smooth general solutions, or even on other stable finite-dimensional manifolds (described in proposition \ref{frac}), is left open.
\item Even if the equation we study is only quadratic, hence ``less nonlinear'' in a way than the cubic Szeg\H{o} equation, it appears to be more turbulent, and the example of the turbulent solutions that we exhibit does not display the phenomenon of \emph{backward energy cascade} (as opposed to the behaviour described in \cite[Theorem 1]{livrePG}). In addition, regarding the cubic Szeg\H{o} flow, it is known \cite{explicit} that all trajectories that belong to a finite-dimensional stable manifold are bounded in every $H^s$, $s>\frac{1}{2}$, and the same phenomenon seems to occur in the case of the more physical situation studied in \cite{Bizon}. However, the dichotomy of Proposition \ref{blowup} looks very similar to what Haiyan Xu observed for a \emph{linear perturbation} of the cubic Szeg\H{o} equation. The proof of Proposition \ref{blowup} heavily relies on the technique she developed in \cite{Xu}.
\item Condition \eqref{res} for norm explosion is only expressed in terms of conservation laws of the flow. Therefore, it is compatible with the autonomous nature of equation \eqref{quad}. But more striking is that \eqref{res} is \emph{homogeneous} : if it is satisfied by $u$, then it is also satisfied by $\lambda u$, $\lambda\in\mathbb{C}$. This allows to construct blow-up solutions with arbitrary small initial data in $H^s$, when $s>\frac{1}{2}$ is given, in great contrast with the situation described by H. Xu in \cite{Xu}.

\end{itemize}

This paper is organized as follow : in section \ref{lax-section}, we point out the Lax pair structure of the quadratic Szeg\H{o} equation ; in section \ref{BMO-section}, we prove the existence of the flow of \eqref{quad} on $BMO_+(\mathbb{T})$ ; finally, in section \ref{turbu-section}, we prove the turbulence results contained in Proposition \ref{blowup}.

All of this work has been done under the supervision of Prof. Patrick Gérard, to whom the author is most grateful. He would also like to thank the MSRI in Berkeley, California, which he attended during the Fall semester 2015, and where this study was started.

\vspace*{2em}

\section{The Lax pair structure}\label{lax-section}

\subsection{Smooth solutions}
We begin by proving a useful and standard lemma :

\begin{lemme}\label{lisse}
Let $s>\frac{1}{2}$, and $u_0\in H^s_+(\mathbb{T})$. Then there exists a unique function solution $u\in C^\infty(\mathbb{R},H^s_+(\mathbb{T}))$ such that $u(0)=u_0$ and
\[ i\partial_tu=2J\Pi(|u|^2)+\bar{J}u^2.\]
We also have $\forall t\in\mathbb{R}$, $M(u(t))=M(u_0)$, $Q(u(t))=Q(u_0)$ and $E(u(t))=E(u_0)$.
\end{lemme}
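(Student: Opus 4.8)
The plan is to prove local well-posedness by a fixed-point argument in $C([-T,T], H^s_+)$ for small $T$, and then upgrade to a global smooth flow using the conservation laws. First I would observe that the nonlinear map $F(u) = 2(u^2|u)\Pi(|u|^2) + (u|u^2)u^2$ on the right-hand side is a (non-holomorphic) polynomial of degree $3$ in $u$, and since $H^s_+(\mathbb{T})$ is a Banach algebra for $s>\frac12$, the estimate $\|F(u)-F(v)\|_{H^s} \lesssim (\|u\|_{H^s}^2 + \|v\|_{H^s}^2)\|u-v\|_{H^s}$ holds; note $\Pi$ is bounded on $H^s$. Hence $u\mapsto u_0 - i\int_0^t F(u(\tau))\,d\tau$ is a contraction on a suitable ball of $C([-T,T],H^s_+)$ provided $T$ is small depending only on $\|u_0\|_{H^s}$. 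This yields a unique local solution; a standard continuation/bootstrap argument shows $u\in C^\infty(\mathbb{R}_{\mathrm{loc}}, H^s_+)$ (differentiate the Duhamel formula in $t$, using that $\partial_t u = -iF(u)$ already lies in $H^s_+$, so $u$ is $C^1$ in time, then iterate).

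Next I would verify the conservation laws on the local smooth solution by direct differentiation. For the mass, $\frac{d}{dt}(u|u) = 2\re(\partial_t u|u) = 2\re\bigl(-i[2J\Pi(|u|^2)+\bar J u^2]\,\big|\,u\bigr)$; one computes $(\Pi(|u|^2)|u) = (|u|^2|u) = (u|u^2) = \bar J$ and $(u^2|u) = J$, so the bracket equals $-i(2J\bar J + \bar J J) = -3i|J|^2$, which is purely imaginary, giving $\frac{d}{dt}Q = 0$. The momentum $M(u)=(Du|u)$ is conserved because $E$ is invariant under the phase-translation group $u\mapsto u(\cdot+\theta)$, whose generator is $-iDu$; equivalently $\frac{d}{dt}M = \omega$-bracket $\{M,E\} = 0$ since $E$ only depends on $u$ through quantities invariant under translation (the integral $\int|u|^2 u$ transforms by a phase $e^{i\theta}$ under $x\mapsto x+\theta$, and $|J|$ is unchanged). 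Finally $E$ is conserved because it is the Hamiltonian: $\frac{d}{dt}E = \langle \nabla E(u), \partial_t u\rangle = \langle \nabla E(u), -i\nabla E(u)\rangle = \im\|\nabla E\|^2\cdot(\text{something})$ — more precisely $\frac{d}{dt}E = \omega(\nabla E, -i\nabla E)$-type expression which vanishes by antisymmetry of $\omega$.

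With $Q$ and $M$ conserved, I get $\|u(t)\|_{H^{1/2}}^2 \simeq (Q+M)(u(t)) = (Q+M)(u_0)$, a bound uniform in $t$. However, this only controls $H^{1/2}$, not $H^s$ for the $s>\frac12$ we care about, so the global existence requires an $H^s$ a priori bound. The main obstacle is therefore establishing that $\|u(t)\|_{H^s}$ cannot blow up in finite time. Since the blow-up alternative from the fixed-point argument says the maximal time of existence $T^*$ is finite only if $\|u(t)\|_{H^s}\to\infty$ as $t\to T^*$, I would close the argument with a Gronwall-type estimate: differentiating $\|u(t)\|_{H^s}^2$ and using the algebra property together with the already-known $H^{1/2}$ (hence $BMO$, hence $L^\infty$-in-the-relevant-sense) control, one expects $\frac{d}{dt}\|u\|_{H^s}^2 \lesssim \|u\|_{H^{1/2}}^2\,\|u\|_{H^s}^2$, giving at worst exponential growth and hence no finite-time blow-up. (Alternatively, and perhaps more cleanly, one can invoke the Lax pair structure established later in the section to get the needed a priori control, but for this standard lemma the Gronwall argument suffices.) Patching the local solutions together yields the global $u\in C^\infty(\mathbb{R}, H^s_+)$, and uniqueness is inherited from the local uniqueness.
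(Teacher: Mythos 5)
Your local theory and the verification of the conservation laws are fine and match the paper's approach (the paper is terser on the conservation laws, but your computations for $Q$, $M$, $E$ are correct). The genuine gap is in the global-existence step. You claim that the algebra property together with the conserved $H^{1/2}$ norm yields
\[
\frac{d}{dt}\|u\|_{H^s}^2 \;\lesssim\; \|u\|_{H^{1/2}}^2\,\|u\|_{H^s}^2,
\]
justifying this by ``$H^{1/2}$, hence $BMO$, hence $L^\infty$-in-the-relevant-sense control.'' This is exactly the point where the argument fails: the tame product estimate gives $\||u|^2\|_{H^s}+\|u^2\|_{H^s}\lesssim \|u\|_{L^\infty}\|u\|_{H^s}$, and $H^{1/2}(\mathbb{T})$ does \emph{not} embed into $L^\infty(\mathbb{T})$ (nor does $BMO$ control $L^\infty$); a bilinear bound of the form $\|uv\|_{H^s}\lesssim\|u\|_{H^s}\|v\|_{H^{1/2}}$ is false (take $u\equiv 1$). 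So the differential inequality you write down is not proved, and the Gronwall step does not close. The paper's proof resolves precisely this obstruction with the Brezis--Gallou\"et inequality $\|u\|_{L^\infty}\leq C_s\|u\|_{H^{1/2}}\sqrt{\log(1+\|u\|_{H^s}/\|u\|_{H^{1/2}})}$, which combined with the conserved $H^{1/2}$ norm gives $\frac{d}{dt}\|u\|_{H^s}^2\leq C\|u\|_{H^s}^2\sqrt{\log(1+C'\|u\|_{H^s}^2)}$ and hence the (non-exponential) bound $\|u(t)\|_{H^s}\leq Ce^{Bt^2}$ --- weaker than what you assert, but still sufficient to preclude finite-time blow-up.

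Your fallback suggestion --- invoking the Lax pair to get the a priori control --- is circular here: Theorem \ref{lax_thm} is stated for smooth solutions \emph{as constructed in this very lemma}, so the lemma must be proved first by a self-contained argument. To repair your proof, replace the claimed inequality by the tame estimate with $\|u\|_{L^\infty}$ and insert the Brezis--Gallou\"et step.
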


In the sequel, we will refer to these solutions as \emph{smooth} solutions.

\begin{proof}[Proof of the lemma.]
It is well-known that $H^s_+$ is an algebra whenever $s>\frac{1}{2}$. In addition, $|J|\leq \|u\|_{L^3}^3\leq C\|u\|_{H^{1/2}}^3$. By a fixed point argument, we thus get the existence of a time $T>0$ only depending on $\|u_0\|_{H^s}$ for which there is a unique solution of \eqref{quad} starting from $u_0$ in $C([-T,T],H^s_+)$. We observe that $M$, $Q$ and $E$ are conserved along these local-in-time solutions.

The global existence comes from a simple energy estimate. For $u$ solution as above, we have
\begin{equation}\label{majo_brutale}
\frac{d}{dt}\|D^su\|_{L^2}^2\leq C\|u\|_{H^s}(\||u|^2\|_{H^s}+\|u^2\|_{H^s})\leq C\|u\|_{H^s}^2\|u\|_{L^\infty},
\end{equation}
and with the help of the Brezis-Gallouët inequality (see \cite{BrG, ann}) stating that there is $C_s>0$ such that
\[\|u\|_{L^\infty}\leq C_s\|u\|_{H^{1/2}}\sqrt{\log \left( 1+\frac{\|u\|_{H^s}}{\|u\|_{H^{1/2}}}\right)},\]
and also using the boundedness of the $H^{1/2}$ norm of local solutions along time, we get $\frac{d}{dt}\|u\|_{H^s}^2\leq C\|u\|^2_{H^s}\sqrt{\log (1+C'\|u\|_{H^s}^2)}$. Integrating this inequality yields
\[ \|u(t)\|_{H^s}\leq Ce^{Bt^2}.\]
This is enough to prove that $\|u(t)\|_{H^s}$ does not become infinite in finite time, so that local solutions constructed above are in fact global. Global uniqueness then follows from the local argument.
\end{proof}

\subsection{The shifted Hankel operators}

As in \cite{GGtori}, let us now introduce three classes of operators acting on $L^2_+$. For a given $u\in H^{1/2}_+$, we define the \emph{Hankel operator} of symbol $u$ :
\[ H_u:\left\lbrace \begin{aligned}
L^2_+ &\longrightarrow L^2_+, \\
h&\longmapsto \Pi(u\bar{h}),
\end{aligned}\right. \]
and for $b\in L^\infty(\mathbb{T})$, we define as well the \emph{Toeplitz operator} of symbol $b$ by
\[  T_b:\left\lbrace \begin{aligned}
L^2_+ &\longrightarrow L^2_+, \\
h&\longmapsto \Pi(bh),
\end{aligned}\right.\]
Observe that $T_b$ is a $\mathbb{C}$-linear operator, whereas $H_u$ is $\mathbb{C}$-antilinear. The adjoint of $T_b$ is $(T_b)^*=T_{\bar{b}}$, and we have $(H_u(h_1)\vert h_2)=(H_u(h_2)\vert h_1)$, for any $h_1$, $h_2\in L^2_+$. A special Toeplitz operator is the \emph{shift operator}, defined by $S:=T_{e^{ix}}=T_z$.\par
Observe that, for $u\in H^{1/2}_+$, we have the following identity :
\[ H_uS=S^*H_u=H_{S^*u}.\]
This gives rise to the definition of the third class of operators : the \emph{shifted Hankel operator} of symbol $u$ is the operator
\[ K_u:= S^*H_u.\]

The following proposition sums up important properties of $K_u^2$ :

\begin{prop}
For any $u\in H^{1/2}_+$, $K_u^2$ is a $\mathbb{C}$-linear positive self-adjoint operator on $L^2_+$. Moreover, $K_u^2$ is trace class (hence compact).
\end{prop}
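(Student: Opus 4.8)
The plan is to establish the three assertions—$\mathbb{C}$-linearity and self-adjointness, positivity, and trace class—in that order, reducing everything to basic identities for the antilinear operators $H_u$ and $K_u = S^*H_u$. First I would record that $K_u$ is $\mathbb{C}$-antilinear (being the composition of the $\mathbb{C}$-linear $S^*$ with the antilinear $H_u$), so $K_u^2$ is $\mathbb{C}$-linear. For self-adjointness: using the symmetry $(H_u h_1 \mid h_2) = (H_u h_2 \mid h_1)$ already noted in the excerpt, together with $S^* = T_{\bar z}$ being the adjoint of $S = T_z$, one computes for $h_1, h_2 \in L^2_+$ that $(K_u^2 h_1 \mid h_2) = (K_u h_1 \mid K_u h_2)^{\text{(conjugated appropriately)}}$; more precisely, since $K_u$ is antilinear one has $(K_u h_1 \mid K_u h_2) = (K_u^{*} K_u h_1 \mid h_2)$ with $K_u^* = H_u S$, and the Hankel symmetry gives $H_u S = K_u$ up to the identity $H_u S = S^* H_u = K_u$, whence $K_u^* K_u = K_u^2$. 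This simultaneously yields self-adjointness and positivity, since $(K_u^2 h \mid h) = (K_u h \mid K_u h) = \|K_u h\|_{L^2}^2 \geq 0$. I would be careful here about the antilinearity bookkeeping—the identity one really wants is $(K_u^2 h_1 \mid h_2) = (K_u h_1 \mid K_u h_2)$ read with the correct conjugation convention, and checking that $K_u$ is \emph{symmetric} in the antilinear sense $(K_u h_1 \mid h_2) = (K_u h_2 \mid h_1)$, inherited from the corresponding property of $H_u$ via $K_u = S^* H_u = H_{S^* u}$.

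For the trace class statement, the natural route is to exhibit $K_u$ as Hilbert–Schmidt, so that $K_u^2$ is trace class, and to show the Hilbert–Schmidt norm is controlled. I would compute $\mathrm{Tr}(K_u^* K_u) = \mathrm{Tr}(K_u^2) = \sum_{n \geq 0} \|K_u(e^{inx})\|_{L^2}^2$ using the orthonormal basis $(e^{inx})_{n \geq 0}$ of $L^2_+$. Since $K_u(e^{inx}) = S^* H_u(e^{inx}) = S^*\Pi(u e^{-inx})$, and $\widehat{u e^{-inx}}(k) = \hat u(k+n)$, one gets $H_u(e^{inx}) = \sum_{k \geq 0}\hat u(k+n) e^{ikx}$ and then $K_u(e^{inx}) = \sum_{k \geq 1}\hat u(k+n) e^{i(k-1)x} = \sum_{j \geq 0}\hat u(j+n+1)e^{ijx}$. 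Hence $\|K_u(e^{inx})\|_{L^2}^2 = \sum_{j \geq 0}|\hat u(j+n+1)|^2$, and summing over $n \geq 0$ gives $\mathrm{Tr}(K_u^2) = \sum_{n \geq 0}\sum_{j \geq 0}|\hat u(j+n+1)|^2 = \sum_{m \geq 1} m\,|\hat u(m)|^2$ (each $m$ arising from the $m$ pairs $(n,j)$ with $n+j+1 = m$). This last sum is exactly comparable to $\|u\|_{H^{1/2}}^2$, which is finite by hypothesis $u \in H^{1/2}_+$. Therefore $K_u$ is Hilbert–Schmidt with $\|K_u\|_{\mathrm{HS}}^2 \simeq \|u\|_{H^{1/2}}^2$, and $K_u^2$ is trace class (a product of two Hilbert–Schmidt operators), hence compact.

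The main obstacle I anticipate is purely notational rather than conceptual: keeping the antilinearity of $H_u$ and $K_u$ straight when manipulating inner products, so that the chain $K_u^* = (S^* H_u)^* = H_u^* S = H_u S = K_u$ (using $H_u^* = H_u$ in the antilinear-symmetric sense) is justified correctly and the conclusion $K_u^2 = K_u^* K_u$ is genuinely a nonnegative self-adjoint operator. Once that identity is in place, positivity and self-adjointness are immediate, and the trace computation is a direct Fourier-side calculation with no analytic subtlety beyond the elementary fact that $\sum_{m \geq 1} m|\hat u(m)|^2 < \infty$ for $u \in H^{1/2}_+$. I would present the trace computation first as the ``engine'' and then note that it simultaneously gives compactness.
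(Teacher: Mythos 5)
Your proof is correct. The paper actually states this proposition without proof (it is standard Hankel-operator material, in the spirit of \cite{GGtori} and Peller's book), and your argument is precisely the expected one: the antilinear symmetry $(K_u h_1\mid h_2)=(K_u h_2\mid h_1)$, inherited from the identity $K_u=S^*H_u=H_{S^*u}$, yields $K_u^*=K_u$ and hence $K_u^2=K_u^*K_u$, giving $\mathbb{C}$-linearity, self-adjointness and positivity at once; the Fourier-side computation $\sum_{n\geq 0}\|K_u(e^{inx})\|_{L^2}^2=\sum_{m\geq 1}m\,|\hat u(m)|^2\leq C\|u\|_{H^{1/2}}^2$ then shows $\tr K_u^2<\infty$. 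One minor imprecision: $\sum_{m\geq 1}m|\hat u(m)|^2$ is bounded above by $\|u\|_{H^{1/2}}^2$ but is not equivalent to it (e.g.\ $K_u=0$ for constant $u$), though only the upper bound is needed here.
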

The same proposition also holds for $H_u^2$, but will not be needed it here, because $K_u$ turns out to be of more importance in the study of equation \eqref{quad}. Let us just mention that the operator norm of $H_u$ in $\mathcal{L}(L^2_+)$ satisfies $\|H_u\|\leq\|u\|_{H^{1/2}}$. (A proof of this elementary fact can be found \emph{e.g.} in \cite[Appendix A]{thirouin1}.)

\vspace{1em}
We can now state the main theorem of this section :

\begin{thm}[Lax pair for $K_u$]\label{lax_thm}
Let $s>\frac{1}{2}$. Assume $u$ is a smooth solution of \eqref{quad} in $H^s$, as in Lemma \ref{lisse}. Then we have a Lax pair identity :
\begin{equation}\label{lax}
\frac{d}{dt}K_u=B_uK_u-K_uB_u,
\end{equation}
where $B_u:=-i(T_{\bar{J}u}+T_{J\bar{u}})$ is a well-defined anti-self-adjoint operator on $L^2_+$.
\end{thm}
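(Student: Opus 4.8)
The plan is to compute $\frac{d}{dt}K_u$ directly by differentiating through the definition $K_u = S^* H_u$, using that $H_u$ depends antilinearly on $u$ while the evolution law \eqref{quad} gives $\partial_t u$ explicitly. First I would record the elementary differentiation rule for Hankel operators: if $t\mapsto u(t)\in H^s_+$ is smooth, then $\frac{d}{dt}H_{u} = H_{\partial_t u}$, and since $\partial_t u = -i(2J\Pi(|u|^2) + \bar J u^2)$, antilinearity in the symbol yields $\frac{d}{dt}H_u = H_{-i(2J\Pi(|u|^2)+\bar J u^2)} = i\,\overline{(\cdots)}$-type terms; more precisely $\frac{d}{dt}H_u = H_{\partial_t u}$ with the scalar $-i$ passing through antilinearly to become $+i$. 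The same passes to $K_u = S^*H_u$ since $S^*$ is constant in $t$: $\frac{d}{dt}K_u = S^* H_{\partial_t u} = K_{\partial_t u}$ (using $K_v := S^*H_v$ for any symbol $v$, extended beyond $H^{1/2}_+$ as needed). So the whole problem reduces to the algebraic identity
\[
K_{-i(2J\Pi(|u|^2)+\bar J u^2)} = B_u K_u - K_u B_u, \qquad B_u = -i(T_{\bar J u} + T_{J\bar u}).
\]

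The key step is therefore a set of commutator/product identities relating Hankel operators, Toeplitz operators, and the shift. I would assemble the following toolbox, all standard for Hankel–Toeplitz calculus on the Hardy space: (a) for $b\in L^\infty$ and $u\in H^{1/2}_+$, $H_{T_b u} = T_b H_u + H_u T_{\bar b}$ when $b$ is antiholomorphic, and more generally $H_{\Pi(bu)} = T_b H_u$ if $b\in L^2_+$, together with the "Leibniz" rule $H_{uv} = T_u H_v + H_u T_{\bar v}$ type formulas (being careful about which factor is in $L^2_+$); (b) $H_u T_{\bar b} = T_b H_u$-style intertwining; (c) the relation $K_u S = S^* K_u$ inherited from $H_u S = S^* H_u$. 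The cleanest route may be the one used by Gérard–Grellier for the cubic case: expand $K_{2J\Pi(|u|^2)}$ and $K_{\bar J u^2}$ separately. For $u^2 = u\cdot u$ with both factors in $L^2_+$, one has $H_{u^2} = T_u H_u + H_u T_u$ — but $T_u$ here means Toeplitz with a holomorphic symbol, i.e. multiplication by $u$ followed by $\Pi$, which on $L^2_+$ is just multiplication; so $H_{\bar J u^2} = \bar J(T_u H_u + H_u T_u)$ and hence $K_{\bar J u^2} = \bar J(S^* T_u H_u + S^* H_u T_u)$. For the term $\Pi(|u|^2) = \Pi(u\bar u)$, I would write $H_{\Pi(|u|^2)}$ using that $|u|^2$ as a symbol gives $H_{\Pi(u\bar u)}$; the identity $H_{\Pi(b u)} = T_b H_u$ for $b = \bar u \in L^2_-$... this needs the version $H_{\Pi(\bar u \cdot u)}$ which one handles via $\langle H_{\Pi(|u|^2)} h_1, h_2\rangle = (|u|^2 \bar h_1 | h_2)$-type manipulation, eventually expressing it through $H_u$ and $T_{\bar u}, T_u$. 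I expect the identity $K_{\Pi(|u|^2)} = $ (Toeplitz)$\cdot K_u - K_u\cdot$(Toeplitz) to emerge, and combining the coefficients $2J$ and $\bar J$ correctly, with the $-i$, to reconstruct exactly $B_u K_u - K_u B_u$.

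I would also need to check the side remarks: that $B_u$ is well-defined and bounded on $L^2_+$ (immediate: $Ju$ and $\bar J u \in L^\infty$ since $u\in H^s_+ \subset L^\infty$ for $s>1/2$, and Toeplitz operators with $L^\infty$ symbols are bounded with norm $\le \|b\|_\infty$), and that $B_u$ is anti-self-adjoint, i.e. $B_u^* = -B_u$: since $(T_b)^* = T_{\bar b}$, we get $(-i(T_{\bar J u}+T_{J\bar u}))^* = i(T_{J\bar u}+T_{\bar J u}) = -B_u$, as claimed. The main obstacle, and the place where care is genuinely required, is the bookkeeping in the product identities — in particular distinguishing Toeplitz operators with holomorphic, antiholomorphic, and general $L^\infty$ symbols, and tracking the shift $S^*$ through each term; the cancellations that produce precisely $B_u$ (and not some $B_u$ plus an extra self-adjoint or scalar piece) are the content of the theorem and must be verified term by term. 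A useful sanity check along the way is to test the identity on $u = ce^{ix}/(1-pe^{ix})$ or on a monomial, where $K_u$ has an explicit finite-rank form.
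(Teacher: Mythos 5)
Your overall strategy -- differentiate $K_u=S^*H_u$ through the symbol, reduce to an algebraic identity for $K_{\partial_t u}$, and establish it via Hankel--Toeplitz calculus -- is indeed the shape of the paper's argument (which does the same thing by applying everything to a test function $h$ and manipulating $\Pi(\bar z\,\dot u\,\bar h)$ directly). But as written the proposal has a genuine gap: the entire content of the theorem is the set of product identities and cancellations, and you explicitly defer them (``I expect the identity \dots to emerge'', ``must be verified term by term''), while the specific identities you do write down are incorrect. The rule $H_{u^2}=T_uH_u+H_uT_u$ is false: testing on $u=z$, $h=z^2$ gives $H_{z^2}(z^2)=1$ but $T_zH_z(z^2)+H_zT_z(z^2)=0$; the correct Leibniz rule is $H_{ab}=T_aH_b+H_aT_{\bar b}$, and the distinction between $T_u$ (plain multiplication on $L^2_+$) and $T_{\bar u}$ is exactly where the operators $T_{\bar Ju}$ and $T_{J\bar u}$ of $B_u$ come from. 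Likewise $H_{\Pi(bu)}=T_bH_u$ fails for holomorphic $b$ (same counterexample); the usable fact is $\Pi(b\,(I-\Pi)f)=0$ for \emph{antiholomorphic} $b$, which is what lets the paper absorb the projector in the term $\Pi(2J\bar z|u|^2\bar h)$. Finally, the parenthetical claim that the scalar $-i$ in $\partial_t u$ ``passes through antilinearly to become $+i$'' is wrong: $v\mapsto H_v$ is $\mathbb{C}$-linear in the symbol ($H_u$ is antilinear in its \emph{argument}), so $H_{-iv}=-iH_v$.

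That last confusion matters because it hides the one structural point that makes the theorem work and that your plan never confronts. The natural computation (carried out in the paper, using the lemma $(I-\Pi)(\bar zf)=\bar z\,\overline{\Pi(\bar f)}$, which is also what makes $K_u$ rather than $H_u$ the right operator -- for $H_u$ a rank-one error term survives) yields an \emph{anticommutator}
\[
i\frac{d}{dt}K_u=A_uK_u+K_uA_u,\qquad A_u:=T_{\bar Ju}+T_{J\bar u}\ \text{self-adjoint},
\]
and it is only because $K_u$ is $\mathbb{C}$-antilinear that multiplying by $-i$ turns $-iK_uA_u$ into $-K_u(-iA_u)=-K_uB_u$, producing the commutator $B_uK_u-K_uB_u$. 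Without tracking this, the two $K_u T$-terms (which appear with $+$ signs) cannot be recombined into $-K_uB_u$. So the proposal would need: (1) corrected product identities with the holomorphic/antiholomorphic roles straightened out, (2) the zero-mode lemma for $\bar z$, and (3) the antilinearity step, before it constitutes a proof.
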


\begin{proof}[Proof]
Let $h\in L_+^2$. We write
\begin{equation*}
i\frac{d}{dt}K_u(h) = \Pi(\bar{z}(i\dot{u})\bar{h})= \Pi(2J\bar{z}|u|^2\bar{h})+\Pi(\bar{J}\bar{z}u^2\bar{h}).
\end{equation*}
Note that the projector $\Pi$ disappeared in the first term, because $\Pi(2J\bar{z}(I-\Pi)(|u|^2)\bar{h})=0$.

Compute in two ways :
\[\Pi(J\bar{z}|u|^2\bar{h})=\Pi(\bar{z}u \overline{\bar{J}uh})=\Pi(\bar{z}u \overline{\Pi(\bar{J}uh)})=K_uT_{\bar{J}u}(h)\]
because $uh=\Pi(uh)$, and
\[ \Pi(J\bar{z}|u|^2\bar{h})=\Pi(J\bar{u}\bar{z}u\bar{h})
=\Pi(J\bar{u}\Pi(\bar{z}u\bar{h}))=T_{J\bar{u}}K_u(h),\]
because $\Pi(J\bar{u}(I-\Pi)(\bar{z}u\bar{h}))=0$.

For the other term, we need an elementary lemma, which can be proved by the simple use of Fourier expansion :
\begin{lemme}\label{barbar}
Given $f\in L^2(\mathbb{T})$, the following identity holds :
\[ (I-\Pi)(\bar{z}f)=\bar{z}\overline{\Pi(\bar{f})}.\]
\end{lemme}

Thus,
\[ \begin{aligned}\Pi(\bar{J}\bar{z}u^2\bar{h})
&= \Pi(\bar{J}u\Pi(\bar{z}u\bar{h}))+\Pi(\bar{J}u(I-\Pi)(\bar{z}u\bar{h}))\\
&= T_{\bar{J}u}K_u(h)+\Pi(\bar{J}\bar{z}u\overline{\Pi(\bar{u}h)})\\
&= T_{\bar{J}u}K_u(h)+K_uT_{J\bar{u}}(h).
\end{aligned}\]

Summing up, and introducing the self-adjoint operator $A_u:=T_{\bar{J}u}+T_{J\bar{u}}$, we have proved that
\[ i\frac{d}{dt}K_u= A_uK_u+K_uA_u.\]
Now, multiply this identity by $-i$, and set $B_u:=-iA_u$. Using the fact that $K_u$ is $\mathbb{C}$-antilinear, we finally get
\begin{equation*}
\frac{d}{dt}K_u=\left[ B_u,K_u \right] = B_uK_u-K_uB_u,
\end{equation*}
which is the claim.
\end{proof}

\begin{cor}\label{equivK}
The eigenvalues $\{\sigma^2_k\}_{k\geq 1}$ (repeated with multiplicity) of $K_u^2$ are conservation laws of equation \eqref{quad}.
\end{cor}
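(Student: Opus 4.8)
The plan is to use the Lax pair identity \eqref{lax} in the standard way: produce a unitary flow $U(t)$ on $L^2_+$ that conjugates $K_{u(t)}$ to $K_{u_0}$, and deduce that $K_{u(t)}^2$ is unitarily equivalent to $K_{u_0}^2$ for every $t$, hence isospectral. It suffices to establish this for the smooth solutions of Lemma \ref{lisse}, for which the Lax pair \eqref{lax} has been proved.

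First I would check that $t\mapsto B_{u(t)}$ is a continuous (in fact smooth) family of bounded, anti-self-adjoint operators on $L^2_+$. Indeed, since $s>\frac12$ we have $u\in C^\infty(\mathbb{R},H^s_+)$ and $H^s_+\hookrightarrow L^\infty(\mathbb{T})$, so $\bar Ju$ and $J\bar u$ belong to $L^\infty(\mathbb{T})$ with norms locally bounded in $t$; hence the Toeplitz operators $T_{\bar Ju}$ and $T_{J\bar u}$ are bounded on $L^2_+$, and $B_u=-i(T_{\bar Ju}+T_{J\bar u})$ is anti-self-adjoint because $A_u=T_{\bar Ju}+T_{J\bar u}$ is self-adjoint (recall $(T_b)^*=T_{\bar b}$). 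By the Cauchy--Lipschitz theorem in the Banach space $\mathcal{L}(L^2_+)$, the linear equation
\[ \frac{d}{dt}U(t)=B_{u(t)}U(t),\qquad U(0)=\mathrm{Id},\]
has a unique global solution $U\in C^1(\mathbb{R},\mathcal{L}(L^2_+))$. Using $B_{u(t)}^*=-B_{u(t)}$ one checks that $\frac{d}{dt}\bigl(U(t)^*U(t)\bigr)=0$ and $\frac{d}{dt}\bigl(U(t)U(t)^*\bigr)=0$, so $U(t)^*U(t)=U(t)U(t)^*=\mathrm{Id}$ and each $U(t)$ is unitary.

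Next, combining the Lax pair \eqref{lax} with the equation for $U$, I would compute
\[ \frac{d}{dt}\bigl(U(t)^*K_{u(t)}U(t)\bigr)=U(t)^*\bigl(-B_uK_u+[B_u,K_u]+K_uB_u\bigr)U(t)=0,\]
so that $U(t)^*K_{u(t)}U(t)=K_{u_0}$, i.e. $K_{u(t)}=U(t)K_{u_0}U(t)^*$ for all $t$; here one uses that $U(t)$ is $\mathbb{C}$-linear, so that conjugating the $\mathbb{C}$-antilinear operator $K_{u_0}$ by $U(t)$ is legitimate. Squaring this identity and using $U(t)^*U(t)=\mathrm{Id}$ yields $K_{u(t)}^2=U(t)K_{u_0}^2\,U(t)^*$. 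Thus $K_{u(t)}^2$ and $K_{u_0}^2$ are unitarily equivalent, hence have the same spectrum with the same multiplicities; in particular the nonincreasing sequence $\{\sigma_k^2\}_{k\ge1}$ of eigenvalues of $K_u^2$ does not depend on $t$, which is the claim.

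I do not expect a genuine obstacle here: the computation is a routine application of the Lax pair philosophy. The only points that need a little care are the boundedness and anti-self-adjointness of $B_u$ (which guarantee that the conjugating flow $U(t)$ exists globally and is unitary) and the bookkeeping of $\mathbb{C}$-antilinearity when passing from the Lax pair for $K_u$ to the conjugation formula for $K_u^2$.
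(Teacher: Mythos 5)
Your proposal is correct and follows essentially the same route as the paper: solve the linear ODE $U'=B_uU$, $U(0)=\mathrm{Id}$, use the anti-self-adjointness of $B_u$ to get unitarity of $U(t)$, and deduce that $K_u^2$ stays unitarily equivalent to $K_{u_0}^2$. The only (immaterial) difference is that the paper first derives $\frac{d}{dt}K_u^2=[B_u,K_u^2]$ and conjugates $K_u^2$ directly, whereas you conjugate $K_u$ and then square; your extra care about the $\mathbb{C}$-antilinearity of $K_u$ in that step is appropriate and correct.
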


\begin{proof}[Proof]
From \eqref{lax}, we get $\frac{d}{dt}K_u^2=B_uK_u^2-K_u^2B_u$. Now consider the system 
\[ \left\lbrace\begin{aligned}
U'(t)&=B_uU(t), \quad t\in \mathbb{R}\\
U(0)&=I,
\end{aligned}\right. \]
where the unknown $U(t)$ belongs to $\mathcal{L}(L^2_+)$. The existence of a global-in-time solution is ensured by the Cauchy-Lipschitz theorem for \emph{linear} ordinary differential equations. Besides, $U(t)$ is a unitary operator for all time (because of the skew-adjointness of $B_u$). An easy computation then shows that $\frac{d}{dt}(U(t)^*K^2_uU(t))=0$, hence $K_u^2$ remains unitarily equivalent to $K_{u(0)}^2$, and its eigenvalues are conserved.
\end{proof}

\begin{rem}
The evolution of $H_u$ can also be computed :
\[\frac{d}{dt}H_u =B_uH_u-H_uB_u+i\bar{J}(u\vert \cdot)u.\]
This is not a Lax pair : an extra term appears from Lemma \ref{barbar}, because of the zero mode. Nevertheless, if we consider the Hamiltonian $E$ defined on functions on $\mathbb{R}$, instead of $\mathbb{T}$, with a projector $\Pi$ given by
\[ \Pi\left( \int_{-\infty}^{+\infty}\hat{f}(\xi)e^{ix\xi}d\xi \right) = \int_{0}^{+\infty}\hat{f}(\xi)e^{ix\xi}d\xi ,\]
we can also make sense of equation \eqref{quad}, and this time, we would have
\[\frac{d}{dt}H_u =B_uH_u-H_uB_u, \]
with $B_u$ defined as above. This suggests to start a program such as the one developed by Oana Pocovnicu \cite{PocoGrowth, PocoSol} for the cubic Szeg\H{o} equation on the line.
\end{rem}

\subsection{Two consequences : finite dimensional invariant manifolds and \texorpdfstring{$L^\infty$}{Linf} bounds}

One of the main consequences of theorem \ref{lax_thm} is the existence of finite dimensional invariant manifolds for the flow of \eqref{quad}.
\begin{déf}
Let $N$ be a nonnegative integer. We set
\[\mathcal{L}(N):=\left\lbrace u\in H^{1/2}_+(\mathbb{T}) \: \middle| \: \rg K_u =N \right\rbrace. \]
\end{déf}
It happens that we have a complete description of $\mathcal{L}(N)$.

\begin{prop}[see \cite{Xu}]\label{frac}
Fix $N\in\mathbb{N}$. The set $\mathcal{L}(N)$ is exactly the set of all rational functions $u$ which can be written in the form
\[ u(z)=\frac{A(z)}{B(z)}, \quad z\in \mathbb{D},\]
where $A$ and $B$ are complex polynomials of degree at most $N$, satisfying $\deg A=N$ or $\deg B=N$, $A\wedge B=1$, $B(0)=1$, and $B$ having no root in the closed unit disc $\bar{\mathbb{D}}$.
\end{prop}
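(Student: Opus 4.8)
The statement is a Kronecker-type description of the finite-rank shifted Hankel operators, so the plan is to reduce it to the classical theorem of Kronecker on Hankel matrices of finite rank, and then to translate carefully the resulting conditions on degrees. The first step is to write down the matrix of $K_u$ in the orthonormal basis $(z^n)_{n\geq 0}$ of $L^2_+$: using the identity $K_u=S^*H_u=H_{S^*u}$ recalled above, one computes $(K_u z^j\vert z^i)=\widehat{S^*u}(i+j)=\hat{u}(i+j+1)$, so that $\rg K_u$ is precisely the rank of the infinite Hankel matrix $\Gamma_u:=\bigl[\hat{u}(i+j+1)\bigr]_{i,j\geq 0}$, that is, of the power series $g:=S^*u=\sum_{n\geq 0}\hat{u}(n+1)z^n$.

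The second step is Kronecker's theorem: $\Gamma_u$ has finite rank if and only if $g$ is a rational function, and, writing $g=P/Q$ in lowest terms with $Q(0)=1$, one has $\rg\Gamma_u=\max(\deg P+1,\deg Q)$ (which is simply $\deg Q$ when $\deg P<\deg Q$). Since $u\in H^{1/2}_+\subset L^2_+$ is holomorphic on $\mathbb{D}$ with boundary trace in $L^2$, the same is true of $g=(u-\hat{u}(0))/z$; being rational and in the Hardy class, $g$ can have no pole in the closed disc $\overline{\mathbb{D}}$, hence $Q$ has no root in $\overline{\mathbb{D}}$.

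The third step is to come back from $g$ to $u=\hat{u}(0)+zg$. Setting $B:=Q$ and $A:=\hat{u}(0)Q+zP$, one gets $u=A/B$ with $B(0)=1$, $B$ without root in $\overline{\mathbb{D}}$, and $\gcd(A,B)$ dividing $\gcd(zP,Q)=1$ (since $Q(0)=1$ forces $\gcd(z,Q)=1$). Conversely, starting from a function $u=A/B$ having the properties listed in the statement, one reverses the computation: $\hat{u}(0)=A(0)$, the polynomial $A-A(0)B$ vanishes at $0$, and $g=(A-A(0)B)/(zB)$ is automatically in lowest terms, any common factor of its numerator and denominator having to divide $A$, hence $\gcd(A,B)=1$. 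In both directions, the core of the matter is the bookkeeping identity
\[ \rg K_u=\max(\deg P+1,\deg Q)=\max(\deg A,\deg B), \]
which follows from the elementary estimate $\deg(A-A(0)B)\leq\max(\deg A,\deg B)$, together with the remark that, when the leading coefficients happen to cancel, the degree of $B=Q$ still pins down the maximum. Granting this identity, the equivalence ``$\rg K_u=N$'' $\iff$ ``$\deg A\leq N$, $\deg B\leq N$, and ($\deg A=N$ or $\deg B=N$)'' is immediate.

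The only genuinely delicate point is this degree bookkeeping around the constant Fourier mode $\hat{u}(0)$: in contrast with $\rg H_u$, which equals $\max(\deg A+1,\deg B)$, the rank of $K_u$ does not see the zero Fourier coefficient, and it is exactly this shift by one that produces the \emph{symmetric} condition ``$\deg A=N$ or $\deg B=N$'' rather than one privileging the numerator. I expect making this count completely airtight---in particular handling the leading-coefficient cancellation and the degenerate case $N=0$ (where $u$ reduces to a constant)---to be the only step that requires real care; the passage from operator rank to matrix rank, the appeal to Kronecker's theorem, and the Hardy-class argument excluding poles in $\overline{\mathbb{D}}$ are all routine.
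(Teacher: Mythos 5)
The paper offers no proof of this proposition at all---it is stated with the attribution ``see \cite{Xu}'' and used as a black box---so there is nothing internal to compare your argument against. On its own merits, your proof is correct and is the standard route: the identification $(K_u z^j\mid z^i)=\hat u(i+j+1)$ is right (and the rank of the antilinear operator $K_u$ does equal the rank of the Hankel matrix of $S^*u$, since the range of $K_u$ is the column space of that matrix), the Hardy-class argument excluding poles of $S^*u$ in $\overline{\mathbb{D}}$ is right, and the degree bookkeeping $\rg K_u=\max(\deg P+1,\deg Q)=\max(\deg A,\deg B)$ is handled correctly, including the two cases that actually need checking (cancellation of leading coefficients when $\deg A=\deg B$, and the constant case $N=0$). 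In effect you reduce the statement about $K_u=H_{S^*u}$ to the Gérard--Grellier/Kronecker classification of $\{\rg H_v=N\}$ applied to $v=S^*u$, which is exactly why the symmetric condition ``$\deg A=N$ or $\deg B=N$'' replaces the asymmetric ``$\deg A=N-1$ or $\deg B=N$'' of the $H_u$ case; your closing remark identifies this correctly. The only cosmetic slip is the phrase ``$g=(A-A(0)B)/(zB)$ is automatically in lowest terms'': what you mean (and what your parenthetical justification actually proves) is that after cancelling the forced factor of $z$ from the numerator against the $z$ in the denominator, the resulting $P/B$ is in lowest terms because a common divisor of $P$ and $B$ divides $zP=A-A(0)B$ and $B$, hence divides $A$, hence is a unit by $A\wedge B=1$. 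That is a wording issue, not a gap.
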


We see that $\mathcal{L}(N)$ is a manifold of complex dimension $2N+1$, and that each $u\in \mathcal{L}(N)$ belongs to $C^\infty(\mathbb{T})$, hence $u\in H^s_+(\mathbb{T})$ for any $s>\frac{1}{2}$.

\begin{cor}
For each $N\in\mathbb{N}$, the flow of \eqref{quad} preserves the manifold $\mathcal{L}(N)$.
\end{cor}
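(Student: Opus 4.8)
The plan is to exploit the Lax pair identity \eqref{lax} from Theorem \ref{lax_thm} together with Proposition \ref{frac}. Since the eigenvalues $\{\sigma_k^2\}$ of $K_u^2$ are conserved (Corollary \ref{equivK}), the rank of $K_u$ is conserved: indeed $\rg K_u = \#\{k : \sigma_k^2 > 0\}$, and each $\sigma_k^2$ stays constant along the flow. Hence if $u_0 \in \mathcal{L}(N)$, meaning $\rg K_{u_0} = N$, then $\rg K_{u(t)} = N$ for all $t$, so $u(t) \in \mathcal{L}(N)$.

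First I would record carefully that $K_{u(t)}^2 = U(t)^* K_{u_0}^2 U(t)$ with $U(t)$ unitary, as established in the proof of Corollary \ref{equivK}; unitary equivalence preserves the rank of $K_u^2$, and since $\ker K_u = \ker K_u^2$ (as $K_u^2$ is self-adjoint, by the Proposition on $K_u^2$), the rank of $K_u$ is preserved as well. Second, I would note that Lemma \ref{lisse} guarantees that for $u_0 \in \mathcal{L}(N) \subset H^s_+$ (any $s > 1/2$, since elements of $\mathcal{L}(N)$ are $C^\infty$) there is a genuine smooth solution $u \in C^\infty(\mathbb{R}, H^s_+)$ to which the Lax pair machinery applies, so the argument is not merely formal. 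Combining, $u(t)$ has $\rg K_{u(t)} = N$ for every $t \in \mathbb{R}$, i.e. $u(t) \in \mathcal{L}(N)$, which is the assertion.

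The only delicate point is the passage from conservation of the spectrum of $K_u^2$ to conservation of the rank of $K_u$ itself, which relies on $\ker K_u = \ker K_u^2$; this is immediate from self-adjointness of $K_u$ (if $K_u^2 h = 0$ then $0 = (K_u^2 h \mid h) = \|K_u h\|^2$), and $K_u$ is self-adjoint because $K_u = S^* H_u$ with $H_u$ symmetric — more precisely one checks $(K_u h_1 \mid h_2) = (K_u h_2 \mid h_1)$ from the identity $(H_u h_1 \mid h_2) = (H_u h_2 \mid h_1)$ stated in the text, so $K_u^2$ is indeed self-adjoint and positive. I do not anticipate a real obstacle here; the corollary is essentially a formal consequence of Corollary \ref{equivK} and the definition of $\mathcal{L}(N)$, the slightly subtle step being only the elementary spectral remark just described.
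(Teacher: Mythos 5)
Your proposal is correct and follows the paper's own argument: both deduce the conservation of $\rg K_u$ from Corollary \ref{equivK} via the identity $\rg K_u = \rg K_{u}^2$ (which the paper asserts without proof and you justify with the elementary computation $(K_u^2h\mid h)=\|K_uh\|^2$). The extra care you take with the antilinearity of $K_u$ and the symmetry identity is welcome but does not change the route.
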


\begin{proof}[Proof]
Since $\rg K_u = \rg K_u^2$, the rank of $K_u$ is given by the number of non-zero eingenvalues in the list $\{\sigma^2_k\}_{k\geq 1}$. By the preceding corollary, this number is constant for the evolution related to the Hamiltonian $E$.
\end{proof}

Now we turn to another consequence, arguing as in \cite{livrePG} :
\begin{cor}
Let $s>1$ and $u_0\in H^s_+$. Then the solution $u$ of \eqref{quad} starting from $u_0$ satisfies
\[ \sup_{t\in\mathbb{R}}\|u(t)\|_{L^\infty} <+\infty.\]
In addition, for any $s'\in (\frac{1}{2},s]$, there exists $C,B>0$ such that
\[ \forall t\in\mathbb{R},\quad \|u(t)\|_{H^{s'}}\leq Ce^{B|t|}.\]
\end{cor}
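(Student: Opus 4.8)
The plan is to derive the $L^\infty$ bound from the conservation of the eigenvalues $\{\sigma_k^2\}_{k\geq 1}$ of $K_u^2$, exactly as in \cite{livrePG}. The key point is that, for $u\in H^{1/2}_+$, the $L^\infty$ norm of $u$ can be controlled by the trace norm of $K_u^2$, i.e.\ by $\sum_k \sigma_k^2 = \tr K_u^2$, up to contributions from the lowest Fourier modes of $u$. Concretely, since $K_u = S^*H_u$ and $H_u(1) = \Pi(\bar u) $-type identities relate $u$ to the action of $K_u$ and $H_u$ on the constant function, one can write $u$ (or $S^*u$) in terms of $H_u$ and $K_u$ applied to $1$, and then estimate $\|u\|_{L^\infty}$ by $\|u\|_{H^{1/2}}$ together with $\tr(K_u^2)$, both of which are conserved. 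Actually the cleanest route is: $\tr(K_u^2) = \sum_{k\geq 1}\sigma_k^2$ is a conserved quantity by Corollary \ref{equivK} (since it is a symmetric function of conserved eigenvalues, and the operator is trace class so the sum converges), and a standard inequality (as used in \cite{livrePG} for the cubic Szeg\H{o} equation) bounds $\|u\|_{L^\infty}$ in terms of $\tr(K_u^2)$ and $|\hat u(0)|$; the zero mode is controlled by the conserved mass $Q$. This gives the uniform bound $\sup_t\|u(t)\|_{L^\infty}<\infty$.

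For the second assertion, I would feed this uniform $L^\infty$ bound back into the energy estimate \eqref{majo_brutale} from the proof of Lemma \ref{lisse}. There we had, for $s>\tfrac12$,
\[
\frac{d}{dt}\|D^s u\|_{L^2}^2 \leq C\|u\|_{H^s}^2\|u\|_{L^\infty}.
\]
Since $\|u(t)\|_{L^\infty}\leq C_0$ uniformly in $t$, this becomes $\frac{d}{dt}\|u\|_{H^s}^2 \leq C C_0 \|u\|_{H^s}^2$, which by Gronwall yields $\|u(t)\|_{H^s}\leq \|u_0\|_{H^s}e^{B|t|}$ with $B = \tfrac12 C C_0$ depending only on $s$ and on the conserved quantities entering $C_0$ (hence ultimately on $\|u_0\|_{H^s}$, or on lower norms as claimed). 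For the intermediate range $s'\in(\tfrac12,s)$ one argues identically with $s$ replaced by $s'$, using that $u\in H^{s'}_+$ a fortiori and that the same $L^\infty$ bound applies; alternatively one interpolates between the conserved $H^{1/2}$ bound and the $H^s$ bound just obtained, which also gives exponential growth of $\|u(t)\|_{H^{s'}}$.

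The main obstacle — really the only non-routine step — is establishing the inequality $\|u\|_{L^\infty} \lesssim F\big(\tr K_u^2, Q(u)\big)$ for some explicit function $F$, i.e.\ showing that the $L^\infty$ norm is genuinely controlled by trace-class data of $K_u$ plus the mass. One must be careful that $K_u$ only ``sees'' $S^* u$ (the shift kills the zero mode), so the constant Fourier coefficient must be handled separately via $Q$; and one needs the convergence and conservation of the full trace $\sum_k\sigma_k^2$, which follows from $K_u^2$ being trace class together with Corollary \ref{equivK}. Once this is in hand, the rest is the Gronwall argument above. I would also remark that this is precisely where the quadratic structure helps: the estimate \eqref{majo_brutale} is \emph{linear} in $\|u\|_{L^\infty}$, so a uniform $L^\infty$ bound immediately yields at most exponential growth, whereas for quartic Hamiltonians one only gets the weaker $e^{Bt^2}$-type bound of Lemma \ref{lisse}.
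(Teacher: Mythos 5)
There is a genuine gap, and it sits exactly at the step you yourself single out as the only non-routine one. The quantity you propose to rely on, $\tr K_u^2=\sum_k\sigma_k^2$, is the square of the \emph{Hilbert--Schmidt} norm of $K_u$, and for a Hankel operator it is explicitly computable: $\tr K_u^2=\sum_{n\geq 1}n|\hat{u}(n)|^2$, which is comparable to $\|S^*u\|_{H^{1/2}}^2$. So an inequality of the form $\|u\|_{L^\infty}\leq F\bigl(\tr K_u^2,Q\bigr)$ would amount to an embedding $H^{1/2}_+\hookrightarrow L^\infty$, which is false: take $\hat{u}(n)=1/(n\log n)$ for $n\geq 2$; then $\sum_n n|\hat{u}(n)|^2<\infty$ but the nonnegative Fourier coefficients are not summable, so $u$ is unbounded. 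A related warning sign is that your argument never uses the hypothesis $s>1$: $\tr K_u^2$ is already finite for $u\in H^{1/2}_+$, where no uniform $L^\infty$ bound can be expected from it.

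What the paper actually uses is the \emph{trace norm} $\tr|K_u|=\sum_k\sigma_k$ (Schatten class $S^1$ --- singular values to the first power, not squared). By Peller's theorem \cite{Pe}, $\tr|H_v|\simeq\|v\|_{B^1_{1,1}}$, and $B^1_{1,1}$ embeds into the Wiener algebra, hence into $L^\infty$; the hypothesis $s>1$ is precisely what guarantees $H^s\hookrightarrow B^1_{1,1}$, i.e.\ that $\tr|K_{u_0}|$ is finite to begin with. Since each $\sigma_k$ is conserved by Corollary \ref{equivK}, so is $\sum_k\sigma_k$, which gives a uniform bound on $\|S^*u(t)\|_{W}$, the zero mode being handled by $Q$ exactly as you suggest. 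Your overall architecture --- conserved spectral data of $K_u$ controls $\|u\|_{L^\infty}$ modulo the zero mode, then feed the uniform $L^\infty$ bound into \eqref{majo_brutale} and apply Gronwall --- is the right one and matches the paper, and the second half of your argument is correct as written. But the specific conserved quantity you chose cannot do the job, and replacing it by $\tr|K_u|$ together with Peller's theorem is not a cosmetic fix: it is the substance of the proof.
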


\begin{proof}[Proof]
By Peller's theorem \cite[Theorem 1.1 p. 232]{Pe}, we know that
\[\tr |H_u| \sim \|u\|_{B^1_{1,1}}=\sum_{j\in \mathbb{N}}2^j\|\Delta_ju\|_{L^1},\]
where $|H_u|=\sqrt{H_u^2}$, $\tr$ is the trace norm, and $\Delta_ju$ is the $j$-th dyadic piece of $u$. On the other hand, when $s>1$, $H^s\hookrightarrow B^1_{1,1}$, so $\tr |K_u|=\tr |H_{S^*u}|$ remains finite for all time. In fact,
\[ \tr |K_u|=\sum_{k\geq 1} \sqrt{\sigma_k^2}, \]
so by Corollary \ref{equivK}, it is even conserved by the flow. As $B^1_{1,1}\hookrightarrow W$, the Wiener algebra, we find that $\sup_{t\in\mathbb{R}}\|S^*u(t)\|_W\leq C\sup_{t\in\mathbb{R}} \tr |K_u|=C\tr |K_{u_0}|\leq C'\|u_0\|_{H^s} <+\infty$. But naturally $|\hat{u}(0)|\leq \sqrt{Q}$, so that it is also bounded along time. Consequently, $\|u\|_W=|\hat{u}(0)|+\|S^*u\|_W$ remains bounded, and so does $\|u\|_{L^\infty}$ of course.

As for the proof of the a priori estimate, it goes as in Lemma \ref{lisse}, but since $\|u\|_{L^\infty}$ can be replaced by a constant in the r.h.s. of \eqref{majo_brutale}, Gronwall's lemma leads to a simple exponential bound.
\end{proof}

\begin{rem}
Here, we see how crucial the control of the $L^2$ norm is, because it enables to bound the mean of $u$. Taking simply as a Hamiltonian the functional $\tilde{E}(u)=\re (u^2\vert u)=\re J$ (instead of $E=\frac{1}{2}|J|^2$) would break one of the symmetries of the equation, and the associated evolution equation would read $i\partial_t u=2\Pi(|u|^2)+u^2$ (\emph{i.e.} no $J$ factor). This equation also admits a Lax pair for $K_u$, but all non zero solutions blow up in finite time because of the zero mode. Indeed, writing $(u\vert 1)=x+iy$ with $x,y\in\mathbb{R}$, the above equation implies that
\begin{equation*}
\left\lbrace \begin{aligned}
\dot{x}&=2xy, \\
\dot{y}&=y^2-x^2-2Q,
\end{aligned}\right.
\end{equation*}
and the fact that $Q\geq x^2+y^2$ leads to $\dot{y}\leq -y^2-3x^2\leq -y^2$. Besides, we can assume that $y_0:=y(t=0)\neq 0$, since $\dot{y}(t=0)\leq -Q<0$. Thus, $y$ must blow up in finite time $T\in [-\frac{1}{|y_0|},\frac{1}{|y_0|}]$.
\end{rem}

\subsection{The case of \texorpdfstring{$H^1$}{H1} data}
Before going further, we show that an elementary argument can treat the case of $H^1$ initial data, in the spirit of \cite{thirouin1}.

\begin{lemme}\label{h1}
Let $u_0\in H^1_+$, and $u$ be the solution of \eqref{quad} starting from $u_0$. Then for any $s'\in (\frac{1}{2},1]$, there exists constants $C,B>0$ such that
\[ \forall t\in\mathbb{R},\quad \|u(t)\|_{H^{s'}}\leq Ce^{B|t|}.\]
We can choose $C=\|u_0\|_{H^1}$ and $B=B_0(2s'-1)\sqrt{2E}\|u_0\|_{H^{1/2}}$, where $B_0$ is some universal constant.
\end{lemme}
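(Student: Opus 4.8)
The plan is to estimate $\frac{d}{dt}\|D^{s'}u\|_{L^2}^2$ directly, exploiting the fact that the nonlinearity is only quadratic so that no genuine derivative loss occurs, and then to bound the resulting right-hand side in terms of conserved quantities only. First I would start from the evolution equation $i\partial_t u = 2J\Pi(|u|^2)+\bar J u^2$ and compute, for $s'\in(\tfrac12,1]$,
\[
\frac{d}{dt}\|D^{s'}u\|_{L^2}^2 = 2\,\re\bigl(D^{s'}(i\partial_t u)\,\big|\, D^{s'}u\bigr)\cdot(-i)^{-1}
\]
(being careful with the $i$), which reduces to controlling $\re\bigl(D^{s'}(2J\Pi(|u|^2)+\bar J u^2)\,\big|\, iD^{s'}u\bigr)$. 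Since $|J|$ is conserved and $|J|=|(u^2|u)|\le \|u\|_{L^3}^3\le C\|u\|_{H^{1/2}}^3$, and more precisely $|J|^2=2E$, the scalar factor $J$ contributes exactly $\sqrt{2E}$ to the estimate.

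The key step is to bound $\|D^{s'}(|u|^2)\|_{L^2}$ and $\|D^{s'}(u^2)\|_{L^2}$ by $\|u\|_{L^\infty}\|u\|_{H^{s'}}$ up to a constant depending on $s'$ — this is a standard Kato–Ponce / fractional Leibniz estimate valid precisely because $s'\le 1$ (so we can write, schematically, $D^{s'}(fg)$ as $fD^{s'}g + gD^{s'}f$ plus a commutator which is itself bounded by $\|f\|_{L^\infty}\|g\|_{H^{s'}}+\|g\|_{L^\infty}\|f\|_{H^{s'}}$). This yields
\[
\frac{d}{dt}\|u\|_{H^{s'}}^2 \le C_{s'}\sqrt{2E}\,\|u\|_{L^\infty}\,\|u\|_{H^{s'}}^2 .
\]
Now — and this is where the $H^1$ hypothesis enters, as opposed to the Brezis–Gallouët route used for general $H^s$ in Lemma \ref{lisse} — I would bound $\|u\|_{L^\infty}$ by interpolation. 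On $L^2_+$ one has the Gagliardo–Nirenberg-type inequality $\|u\|_{L^\infty}\le C\|u\|_{H^{1/2}}^{\theta}\|u\|_{H^{1}}^{1-\theta}$ for suitable $\theta$; but since we only want the final exponent to be $2s'-1$, the cleaner approach is to interpolate the $H^{s'}$ norm itself between $H^{1/2}$ and $H^1$: writing $\|u\|_{H^{s'}}\le \|u\|_{H^{1/2}}^{2(1-s')}\|u\|_{H^1}^{2s'-1}$ and $\|u\|_{L^\infty}\lesssim \|u\|_{H^{1/2}}^{\alpha}\|u\|_{H^{s'}}^{1-\alpha}$ is circular, so instead I would track $y(t):=\|u(t)\|_{H^{s'}}^2$ and use $\|u\|_{L^\infty}\le C\|u\|_{H^{1/2}}^{1/(2s')}\|u\|_{H^{s'}}^{(2s'-1)/(2s')}$, substitute, and absorb. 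After simplification the differential inequality becomes, essentially, $\dot y \le C_{s'}\sqrt{2E}\,\|u_0\|_{H^{1/2}}^{1/(2s')}\,y^{1+(2s'-1)/(4s')}$ — but this would give a power-law, not exponential, blow-up bound, which is too weak. The correct bookkeeping, as in \cite{thirouin1}, is to get a \emph{linear} differential inequality $\frac{d}{dt}\|u\|_{H^{s'}}\le B\|u\|_{H^{s'}}$ with $B=B_0(2s'-1)\sqrt{2E}\|u_0\|_{H^{1/2}}$; this forces one to use the logarithmic/interpolation inequality in the sharpened form where the $\|u\|_{H^{s'}}$ on the right appears only to the first power after dividing through, i.e. one bounds $\|D^{s'}(u^2)\|_{L^2}\le (2s'-1)$-weighted combination that leaves a single $\|u\|_{H^{s'}}$ and a factor $\|u\|_{H^{1/2}}$. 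Concretely, the fractional Leibniz constant for $D^{s'}$ acting on a product of two $L^2_+$ functions can be made proportional to $(2s'-1)$ (it vanishes at $s'=1/2$ where $u^2\in L^1$ scaling degenerates and at $s'=1$ where exact Leibniz holds — the intermediate constant interpolates), and pairing with $iD^{s'}u$ one copy of $u$ is measured in $H^{1/2}\hookrightarrow$ (via the $L^3$ or $L^4$ Sobolev embedding) exactly supplying the $\|u_0\|_{H^{1/2}}$ factor while the conserved $|J|=\sqrt{2E}$ supplies the rest. Then Grönwall gives $\|u(t)\|_{H^{s'}}\le \|u_0\|_{H^{s'}}e^{B|t|}$, and for $s'\le 1$ we have $\|u_0\|_{H^{s'}}\le\|u_0\|_{H^1}$, yielding $C=\|u_0\|_{H^1}$.

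The main obstacle is getting the constant $B$ in exactly the claimed form $B_0(2s'-1)\sqrt{2E}\,\|u_0\|_{H^{1/2}}$ — in particular producing the factor $(2s'-1)$, which must come from a careful tracking of the fractional Leibniz / commutator constant as $s'$ ranges over $(\tfrac12,1]$ and from the interpolation of the $L^\infty$-type norm, rather than from a crude application of Kato–Ponce. The linearity of the resulting differential inequality (exponential, not super-exponential, growth) is the genuinely quadratic-equation feature: unlike in Lemma \ref{lisse} for $s>1$ where Brezis–Gallouët produces a $\sqrt{\log}$ factor and hence $e^{Bt^2}$, here the product structure $|u|^2$ and $u^2$ together with the bounded $H^{1/2}$ norm lets one peel off exactly one power of $\|u\|_{H^{s'}}$ on the right, so no logarithm is needed and the bound is clean.
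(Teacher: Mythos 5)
Your overall strategy (an energy estimate directly at the level of $H^{s'}$, via a fractional Leibniz rule plus an $L^\infty$ bound on $u$) does not close, and you essentially concede this yourself: the differential inequality you obtain is superlinear in $\|u\|_{H^{s'}}$, and the device you invoke to repair it --- a fractional Leibniz constant ``proportional to $(2s'-1)$'' that supposedly vanishes at both endpoints --- is not a real estimate; it is reverse-engineered from the answer and is even internally inconsistent, since the claimed $B$ is \emph{largest} at $s'=1$. The underlying obstruction is that $\|u\|_{L^\infty}$ is not controlled by conserved quantities for $H^1$ data (the paper only obtains a uniform $L^\infty$ bound for $s>1$, via Peller's theorem), so any route through $\|u\|_{L^\infty}$ is either circular or lossy.

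The paper's proof avoids all of this with two ideas you are missing. First, since the $H^{1/2}$ norm is conserved (up to equivalence), the interpolation $\|u\|_{H^{s'}}\le \|u\|_{H^{1/2}}^{2(1-s')}\|u\|_{H^1}^{2s'-1}$ reduces the whole lemma to the case $s'=1$; this interpolation is the sole source of the factor $(2s'-1)$ in $B$, not any commutator constant. Second, at $s'=1$ the Leibniz rule is exact and one computes
\[
\frac{d}{dt}\|Du\|_{L^2}^2 = -4\im\bigl(J\,\Pi(u\overline{Du})\,\big|\,Du\bigr) + 4\im\bigl(J\bar u+\bar J u \,\big|\, |Du|^2\bigr).
\]
The second term vanishes identically because $J\bar u+\bar J u$ and $|Du|^2$ are real-valued (equivalently, the Toeplitz operator $T_{J\bar u+\bar J u}$ is self-adjoint, so its contribution is killed by the imaginary part), and the first term equals $-4\im(J\,H_u(Du)\mid Du)$, which is bounded by $4\sqrt{2E}\,\|u\|_{H^{1/2}}\|Du\|_{L^2}^2$ using $|J|=\sqrt{2E}$ and the operator bound $\|H_u\|\le\|u\|_{H^{1/2}}$. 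Gronwall then gives exponential growth of the $H^1$ norm with rate proportional to $\sqrt{2E}\,\|u_0\|_{H^{1/2}}$, and the interpolation step converts this into the stated bound for every $s'\in(\tfrac12,1]$. In short: no fractional calculus and no $L^\infty$ control are needed; the quadratic structure manifests itself through the exact cancellation of the real-symbol term and the appearance of a Hankel operator bounded by the conserved $H^{1/2}$ norm.
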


\begin{proof}[Proof]
Since $\|u\|_{H^{1/2}}$ is a bounded quantity, it is enough to show the bound in the case when $s'=1$, by interpolation. Since the $L^2$ norm of $u$ is a constant of motion, we can study the evolution of the $H^1$ norm of $u$ simply by computing :
\[ \begin{aligned}
\frac{d}{dt}\|Du(t)\|_{L^2}^2&=2\re (D\dot{u}|Du)=2\im(2JD(|u|^2)+\bar{J}D(u^2)|Du)\\
&=4\im(JuD\bar{u}+J\bar{u}Du+\bar{J}uDu|Du) \\
&=-4\im (J\Pi(u\overline{Du})|Du) + 4\im \left( J\bar{u}+\bar{J}u\middle| |Du|^2\right).
\end{aligned}\]
Because of the imaginary part, the second term cancels out. Observing that $\Pi(u\overline{Du})=H_u(Du)$, we claim that the modulus of the first term is controlled by
\[ 4|J|\cdot\|\Pi(u\overline{Du})\|_{L^2}\|Du\|_{L^2}\leq 4\sqrt{2E}\|u\|_{H^{1/2}}\|Du\|_{L^2}^2.\]
Indeed $\|H_u\|\leq\|u\|_{H^{1/2}}$ as mentioned above. Since the $H^{1/2}$ norm of $u$ remains uniformly bounded along trajectories, this shows that the $H^1$ norm of $u$ grows at most exponentially in time.
\end{proof}
\vspace*{2em}

\section{The flow on \texorpdfstring{$BMO_+$}{BMO}}\label{BMO-section}
The purpose of this section is to take advantage of the Lax pair structure in order to prove, as in \cite{GKoch}, that the flow of \eqref{quad} can be extended to $BMO_+(\mathbb{T})$. This space can be defined as the intersection of the $BMO$ space of John and Nirenberg with $L^2_+$, or equivalently as the image of $L^\infty(\mathbb{T})$ through $\Pi$ :
\[ BMO_+(\mathbb{T})=\{ \Pi(b)\mid b\in L^\infty(\mathbb{T})\}.\]
The norm is then given by
\[ \|u\|_{BMO}=\inf\{ \|b\|_{L^\infty}\mid \Pi(b)=u\}.\]
Lastly, $BMO_+$ can be seen as the dual of $L^1_+$ (\emph{i.e.} of $L^1$ functions of the torus with vanishing negative frequencies), and thus can also be equipped with the corresponding weak-$*$ topology.

Clearly, if $u=\Pi(b)$ for some $b\in L^\infty$, then for any $h\in L^2_+$, $\Pi(u\bar{h})=\Pi(\Pi(b)\bar{h})=\Pi(b\bar{h})$, so that $\|H_u(h)\|_{L^2}\leq \|b\|_{L^\infty}\|h\|_{L^2}$, hence $H_u$ is continuous on $L^2_+$. Nehari \cite{Nehari} proved that the converse is true : $H_u$ is a continuous operator of $L^2_+$ if and only if $u\in BMO_+$ ; in that case, we have $\|H_u\|=\|u\|_{BMO}$.

The only other fact we will use about $BMO_+$ is that it is continuously embedded in $L^p$ for any $p<\infty$.

\vspace*{1em}
Before turning to the existence of a flow map on $BMO_+$, we would like to prove two crucial lemmas that we will use repeatedly in the sequel.

\begin{lemme}\label{BMO-bound}
Let $u$ be a smooth\footnote{in the sense of Lemma \ref{lisse}, as always.} solution of \eqref{quad}, with $u(0)=u_0$. Then $\forall t\in\mathbb{R}$,
\[ \|u_0\|_{BMO}^2-\|u_0\|_{L^2}^2\leq \|u(t)\|^2_{BMO}\leq \|u_0\|_{BMO}^2+\|u_0\|_{L^2}^2.\]
\end{lemme}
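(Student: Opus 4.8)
The plan is to relate the $BMO$ norm of a smooth solution to the operator norm of the Hankel operator $H_{u(t)}$ via Nehari's theorem ($\|u\|_{BMO}=\|H_u\|$), and then to control the evolution of $\|H_u\|$ by using the Lax pair for $K_u$ together with the elementary relation between $H_u$ and $K_u$. Concretely, recall $H_u S = S^* H_u = H_{S^*u} = K_u$, and conversely $H_u$ differs from $K_u$ essentially by the rank-one contribution of the zero Fourier mode $\hat u(0)$. Since $|\hat u(0)|^2 \le (u|u) = Q(u_0)$ is conserved, and $\|K_u\|^2$ is the largest eigenvalue $\sigma_1^2$ of $K_u^2$, which is a conservation law by Corollary \ref{equivK}, the quantity $\|K_{u(t)}\|$ is in fact \emph{constant} along the flow. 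The statement should then follow from a two-sided comparison $\big|\,\|H_u\|^2 - \|K_u\|^2\,\big| \le |\hat u(0)|^2 \le Q$.

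First I would make the comparison between $H_u^2$ and $K_u^2$ precise. Writing $K_u^2 = S^* H_u^2 S$ (using $K_u = H_u S = S^* H_u$ and antilinearity of $H_u$, so $K_u^2 = S^* H_u H_u S = S^* H_u^2 S$), and noting $S S^* = I - (\,\cdot\,|1)1$ is the orthogonal projection onto $L^2_+ \ominus \mathbb{C}1$, one gets that $H_u^2$ and $K_u^2 \oplus 0$ (on the line) are related by conjugation by the partial isometry $S$ up to the rank-one defect at the zero mode. This yields the operator inequalities $K_u^2 \le S^* H_u^2 S$ trivially (equality in fact) and, more usefully, a bound of the form $\big| \langle H_u^2 h, h\rangle - \langle K_u^2 (S^* \text{-shift of } h), \cdot\rangle \big|$ controlled by $|\hat u(0)|^2 \|h\|^2$; from this the eigenvalue interlacing / min-max gives $\|H_u\|^2 \le \|K_u\|^2 + |\hat u(0)|^2$ and $\|K_u\|^2 \le \|H_u\|^2 \le \|K_u\|^2 + |\hat u(0)|^2$. (Alternatively one argues directly: $\|H_u h\|^2 = \|S^* H_u h\|^2 + |\langle H_u h, 1\rangle|^2 = \|K_u h\|^2 + |\langle h, \Pi(\bar u)\rangle|^2$ by self-adjointness-type symmetry of $H_u$, and $|\langle h, \Pi(\bar u)\rangle| \le \|h\| \,|\hat u(0)|$ since only the zero mode of $\bar u$ survives... this needs care but is the cheap route.)

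Then I would conclude: at $t=0$, $\|H_{u_0}\|^2 = \|u_0\|_{BMO}^2$ (Nehari), so $\|K_{u_0}\|^2 \le \|u_0\|_{BMO}^2 \le \|K_{u_0}\|^2 + |\hat u_0(0)|^2 \le \|K_{u_0}\|^2 + Q(u_0)$. At time $t$, $\|K_{u(t)}\|^2 = \sigma_1^2$ is unchanged, $|\hat u(t)(0)|^2 \le Q(u(t)) = Q(u_0)$, and again $\|u(t)\|_{BMO}^2 = \|H_{u(t)}\|^2 \in [\,\|K_{u(t)}\|^2,\ \|K_{u(t)}\|^2 + |\hat u(t)(0)|^2\,]$. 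Combining, $\|u(t)\|_{BMO}^2 \le \|K_{u_0}\|^2 + Q(u_0) \le \|u_0\|_{BMO}^2 + Q(u_0) = \|u_0\|_{BMO}^2 + \|u_0\|_{L^2}^2$, and $\|u(t)\|_{BMO}^2 \ge \|K_{u_0}\|^2 \ge \|u_0\|_{BMO}^2 - |\hat u_0(0)|^2 \ge \|u_0\|_{BMO}^2 - \|u_0\|_{L^2}^2$, which is exactly the claim.

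The main obstacle I expect is getting the $H_u$-versus-$K_u$ comparison with the \emph{right} constant, i.e. exactly $|\hat u(0)|^2$ rather than some harmless multiple: one must be careful that $H_u$ is $\mathbb{C}$-antilinear (so ``$H_u^2$'' and the use of the symmetric bilinear form $(H_u h_1 | h_2) = (H_u h_2 | h_1)$ must be handled consistently), and that the defect operator $H_u^2 - S^* {}^{-1}(\cdots)$ really is controlled by the zero mode alone and not by the first few modes. The safest implementation is probably the direct identity $\|H_u h\|_{L^2}^2 = \|K_u h\|_{L^2}^2 + |(H_u h\,|\,1)|^2$ and then $(H_u h | 1) = (H_u 1 | h) = (\Pi(u)|h) \cdot$ — wait, $(H_u 1 | h) = (\Pi(\bar u \cdot 1)|h)$, hmm, $H_u 1 = \Pi(\bar u) $ has only the zero Fourier mode contributing since $\bar u = \sum_{n\le 0}\overline{\hat u(n)} e^{inx}$ and $\Pi$ keeps $n=0$, so $H_u 1 = \overline{\hat u(0)}$ as a constant; hence $|(H_u h|1)| = |\hat u(0)|\,|( h|1)| \le |\hat u(0)|\,\|h\|$ — giving $\|H_u h\|^2 \le \|K_u h\|^2 + |\hat u(0)|^2 \|h\|^2$ and the reverse $\|K_u h\| \le \|H_u h\|$, hence the two-sided operator-norm bound cleanly, with the conservation of $\sigma_1^2$ and of $Q$ doing the rest.
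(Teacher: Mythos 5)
Your overall strategy is exactly the paper's: Nehari's theorem gives $\|u\|_{BMO}=\|H_u\|$, the Lax pair makes $\|K_{u(t)}\|$ constant in time, and one compares $\|H_u\|$ with $\|K_u\|=\|H_{S^*u}\|$ through the zero-mode defect $\|H_u h\|_{L^2}^2=\|S^*H_uh\|_{L^2}^2+|(H_uh\,|\,1)|^2$. However, your evaluation of that defect is wrong. With the paper's convention $H_u(h)=\Pi(u\bar h)$ one has $H_u(1)=\Pi(u)=u$, not the constant $\overline{\hat u(0)}$ --- you slipped into the convention $h\mapsto\Pi(\bar uh)$, which is a Toeplitz rather than a Hankel operator. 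Hence, by the symmetry $(H_uh_1\,|\,h_2)=(H_uh_2\,|\,h_1)$, the defect is $(H_uh\,|\,1)=(H_u1\,|\,h)=(u\,|\,h)$, and the correct bound is $|(H_uh\,|\,1)|\le\|u\|_{L^2}\|h\|_{L^2}$, not $|\hat u(0)|\,\|h\|_{L^2}$. Your claimed two-sided comparison $\bigl|\,\|H_u\|^2-\|K_u\|^2\,\bigr|\le|\hat u(0)|^2$ is in fact false: for $u=1+z$ one computes $\|H_u\|^2=(3+\sqrt{5})/2\approx 2.62$ while $\|K_u\|=\|H_1\|=1$ and $|\hat u(0)|^2=1$, so the gap exceeds $|\hat u(0)|^2$ (though it is $\le\|u\|_{L^2}^2=2$, as the correct identity requires).

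Fortunately the error is harmless for the lemma: your final chain of inequalities only ever uses the defect through the bound $|\hat u(0)|^2\le Q$, so replacing $|\hat u(0)|^2$ by $Q=\|u\|_{L^2}^2$ everywhere --- which is what the identity $\|H_uh\|^2=\|K_uh\|^2+|(u\,|\,h)|^2$ forces anyway --- repairs the argument, and the repaired proof coincides with the one in the paper. The operator-inequality/min-max detour of your second paragraph is unnecessary; the direct identity together with conservation of $\|K_u\|$ and of $Q$ is all that is needed.
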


\begin{proof}[Proof.]
From the Lax pair and the proof of Corollary \ref{equivK}, we know that along the evolution, $K_u$ remains unitary equivalent to $K_{u_0}$. In particular, their operator norms are equal. Since $K_u=H_{S^*u}$, we then know that $\|S^*u\|_{BMO}$ is a conserved quantity. Now, for $h\in L^2_+$,
\[ \|H_{S^*u}(h)\|_{L^2}^2=\|S^*H_u(h)\|_{L^2}^2=\|H_u(h)\|_{L^2}^2-|(u\vert h)|^2,\]
and taking the supremum over $h$ with $\|h\|=1$, we get
\[ \|S^*u\|_{BMO}^2\leq \|u\|_{BMO}^2\leq \|S^*u\|_{BMO}^2+\|u\|_{L^2}^2.\]
As $\|u\|_{L^2}^2$ is also conserved, this gives the result.
\end{proof}

Now we can state the main lemma, where the quadratic nature of \eqref{quad} is the most striking :

\begin{lemme}[Lipschitz estimate in $L^2$]\label{lipsch}
Let $u$ and $v$ be two smooth solutions of \eqref{quad}, with $u(0)=u_0$ and $v(0)=v_0$. Then there exists $B$ depending only on $\|u_0\|_{BMO}$ and $\|v_0\|_{BMO}$, such that $\forall t\in\mathbb{R}$,
\begin{equation}\label{l2-lip}
 \|u(t)-v(t)\|_{L^2}\leq e^{B|t|}\|u_0-v_0\|_{L^2}.
\end{equation}
\end{lemme}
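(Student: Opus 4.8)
The plan is to derive a differential inequality for $\frac{d}{dt}\|u(t)-v(t)\|_{L^2}^2$ and then apply Gronwall's lemma, exactly in the spirit of the energy estimates used above, but now exploiting the fact that the nonlinearity in \eqref{quad} is quadratic so the difference $w:=u-v$ appears linearly (up to bounded coefficients). Setting $w=u-v$, I would compute $i\partial_t w = 2J_u\Pi(|u|^2)+\bar J_u u^2 - 2J_v\Pi(|v|^2)-\bar J_v v^2$, and reorganize the right-hand side into terms that are products of a bounded factor (controlled in $L^\infty$ or $BMO$-type operator norm) with $w$ or with $\bar w$, plus terms proportional to $J_u-J_v$. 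The point is that $J_u-J_v = (u^2|u)-(v^2|v)$ is itself $O(\|w\|_{L^2})$ with constant depending on $\|u\|_{L^3}+\|v\|_{L^3}$, which by Lemma \ref{BMO-bound} and the embedding $BMO_+\hookrightarrow L^p$ is bounded in terms of $\|u_0\|_{BMO}$ and $\|v_0\|_{BMO}$ alone.

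The key steps, in order: (i) write $w=u-v$ and expand $i\partial_t w$; (ii) telescope each quadratic expression, e.g. $\Pi(|u|^2)-\Pi(|v|^2)=\Pi(u\bar w)+\Pi(\bar v w)$ and $u^2-v^2 = (u+v)w$, and also $J_u-J_v = (u^2|u)-(v^2|v) = ((u+v)w|u)+(v^2|w)$; (iii) take the inner product with $w$, then the imaginary part, to get $\frac12\frac{d}{dt}\|w\|_{L^2}^2 = \im(i\partial_t w | w)$ up to a sign — more precisely $\frac{d}{dt}\|w\|_{L^2}^2 = 2\re(\partial_t w|w) = -2\im(i\partial_t w|w)$; (iv) bound each resulting term by $C\|w\|_{L^2}^2$, where $C$ collects factors like $|J_u|$, $|J_v|$, $\|u\|_{L^4}$, $\|v\|_{L^4}$, $\|u\|_{L^6}$, $\|v\|_{L^6}$ — all of which are controlled, uniformly in $t$, by $\|u_0\|_{BMO}$ and $\|v_0\|_{BMO}$ via Lemma \ref{BMO-bound} (which conserves $\|u\|_{BMO}$ up to $\|u_0\|_{L^2}$, itself $\le\|u_0\|_{BMO}$ up to a constant) and the continuous embedding $BMO_+\hookrightarrow L^p$; (v) conclude by Gronwall's inequality that $\|w(t)\|_{L^2}\le e^{B|t|}\|w(0)\|_{L^2}$.

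One subtlety worth spelling out: terms like $\im(J_v(u+v)\bar u\, w\,|\,w)$ involve a product of three functions times $w$ and its conjugate against $w$; these are genuinely cubic in the "large" variables $u,v$ but only quadratic in $w$, so by Hölder (e.g. $L^6\times L^6\times L^6$ for the product of three factors and $L^2\times L^2$ for the two copies of $w$, or by pulling out one factor in $L^\infty$ — but $BMO$ is not $L^\infty$, so one should prefer the $L^p$ route) they are bounded by $C\|w\|_{L^2}^2$ with $C$ depending only on the conserved/bounded quantities. The terms coming from $J_u-J_v$ need the Cauchy–Schwarz estimate $|J_u-J_v|\le (\|u\|_{L^4}+\|v\|_{L^4})\|u\|_{L^4}\|w\|_{L^2} + \|v\|_{L^4}^2\|w\|_{L^2}$, again bounded using Lemma \ref{BMO-bound} and $BMO_+\hookrightarrow L^4$.

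The main obstacle is purely bookkeeping: correctly enumerating all the telescoped terms (there are several, coming from the two pieces of the Hamiltonian vector field and from $J_u-J_v$) and checking that each is of the form "bounded coefficient times quadratic-in-$w$", so that no derivative of $w$ and no unbounded norm of $u,v$ ever appears. Since \eqref{quad} contains no spatial derivative, this is exactly what one expects, and the only real content is that every coefficient that shows up is controlled uniformly in time by $\|u_0\|_{BMO}$ and $\|v_0\|_{BMO}$, which Lemma \ref{BMO-bound} guarantees. I would expect $B$ to come out as something like $B_0(1+\|u_0\|_{BMO}+\|v_0\|_{BMO})^3$ for a universal constant $B_0$.
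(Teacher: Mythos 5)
Your overall strategy (a differential inequality for $\|u-v\|_{L^2}^2$ with coefficients controlled through Lemma \ref{BMO-bound} and the embedding $BMO_+\hookrightarrow L^p$, then Gronwall) is exactly the paper's, and your direct telescoping is an acceptable substitute for the paper's integral form $\int_0^1 dF(w_\theta)\,d\theta$ with $w_\theta=\theta u+(1-\theta)v$. The gap is in step (iv), precisely at the point you flag as a ``subtlety''. After telescoping, the right-hand side contains terms of the form $\im\bigl(bw\,\big|\,w\bigr)=\im\frac{1}{2\pi}\int_{\mathbb{T}} b\,|w|^2$ with $b$ built from $u,v,\bar u,\bar v$: for instance $\overline{J_v}\bigl((u+v)w\,\big|\,w\bigr)$ from $u^2-v^2=(u+v)w$, and $2J_v\bigl(\bar v w\,\big|\,w\bigr)$ from $\Pi(|u|^2)-\Pi(|v|^2)$. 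Since $b$ is only in $BMO\subset\bigcap_{p<\infty}L^p$ and \emph{not} in $L^\infty$, any H\"older estimate forces one of the two copies of $w$ into $L^q$ with $q>2$, and $\|w\|_{L^q}$ is not controlled by $\|w\|_{L^2}$; the reciprocals of your proposed exponents ($L^6\times L^6\times L^6\times L^2\times L^2$) sum to $3/2$, not $1$. What H\"older actually gives is at best $C\|w\|_{L^2}$ (linear, via $\|w\|_{L^4}\leq\|u\|_{L^4}+\|v\|_{L^4}\leq C$), and $\frac{d}{dt}\|w\|_{L^2}^2\leq A\|w\|_{L^2}^2+B\|w\|_{L^2}$ does not yield the Lipschitz bound \eqref{l2-lip}.

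The paper closes this by an algebraic cancellation plus Nehari's theorem, and you need both. In the symmetric form, the operator part of $dF(w)\cdot h$ is $2T_{\overline{J_w}w+J_w\overline w}(h)+2J_wH_w(h)$: the Toeplitz symbol $2\re(\overline{J_w}w)$ is \emph{real}, so that operator is self-adjoint and its contribution vanishes under the imaginary part; the surviving Hankel term is bounded by $|J_w|\,\|w\|_{BMO}\|h\|_{L^2}^2$ thanks to Nehari's identity $\|H_w\|=\|w\|_{BMO}$. In your asymmetric splitting the same rescue exists but must be performed explicitly: $\bigl(\Pi(u\bar w)\big|w\bigr)=\bigl(H_u(w)\big|w\bigr)$ is handled by Nehari applied to $H_u$, while the remaining dangerous terms recombine into $\im\int 4\re(J_v\bar v)\,|w|^2=0$ plus $\im\,\overline{J_v}\,(w^2|w)$, and the latter is $O(\|w\|_{BMO}\|w\|_{L^2}^2)$ only because $|(w^2|w)|=|(H_w(w)|w)|\leq\|H_w\|\,\|w\|_{L^2}^2$. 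Without invoking the self-adjointness cancellation and the $BMO$ bound on Hankel operators, the estimate does not close, so as written the proof has a genuine gap.
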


\begin{proof}[Proof.]
Write \eqref{quad} as $i\dot{u}=F(u)$, and compute
\[ \frac{d}{dt}\|u-v\|_{L^2}^2=2\im (F(u)-F(v)\vert u-v)=2\int_0^1\im \left( dF(w_\theta)\cdot(u-v)\vert u-v\right) d\theta, \]
where $w_\theta:=\theta u+(1-\theta)v$, for $\theta\in [0,1]$. We have
\[ dF(w)\cdot h= \left( 4(h\vert |w|^2)+2(w^2\vert h) \right) \Pi(|w|^2)+\left( 2(|w|^2\vert h)+(h\vert w^2) \right) w^2 +2T_{\overline{J_w}w+J_w\overline{w}}(h)+2J_wH_w(h).\]
Here, $J_w:=(w^2\vert w)$. Note that the operator $T_{\overline{J_w}w+J_w\overline{w}}$ is self-adjoint. Hence its contribution is cancelled by the imaginary part, and
\[ \frac{d}{dt}\|u-v\|_{L^2}^2=\int_0^1  8\im \left( (w_\theta^2\vert u-v)(|w_\theta|^2\vert u-v) \right) +4\im \left( J_{w_\theta} H_{w_{\theta}}(u-v)\vert u-v \right) d\theta.\]
Straightforward Cauchy-Schwarz inequalities then yield
\[ \left| \frac{d}{dt}\|u-v\|_{L^2}^2\right|\leq \int_0^1  \left(8\|w_\theta\|_{L^4}^4\|u-v\|_{L^2}^2 +4\|w_\theta\|_{L^3}^3 \|w_{\theta}\|_{BMO}\|u-v\|_{L^2}^2 \right) d\theta.\]
Bounding the $L^p$ norms of $w_\theta$ with the $BMO$ norm of $u$ and $v$, and using the preceding lemma, we find a constant $B$ depending on $\|u_0\|_{BMO}$ and $\|v_0\|_{BMO}$ only, and such that
\[ \left| \frac{d}{dt}f(t)\right| \leq Bf(t), \quad f(t):=\|u(t)-v(t)\|_{L^2}^2.\]
Solving the usual Gronwall inequality leads to the statement of the lemma.
\end{proof}

Constructing the flow on $BMO_+$ simply consists in adjusting the strategy of \cite{GKoch}.

\begin{prop}
For every $u_0\in BMO_+$, there exists a unique $u\in C(\mathbb{R},L^2_+)\cap C_{w*}(\mathbb{R},BMO_+)$ solution to
\[ i\partial_tu=2J\Pi(|u|^2)+\overline{J}u^2,\quad u(0)=u_0.\]
This solution stays bounded in the $BMO$ norm : for all $t\in\mathbb{R}$,
\[ \|u_0\|_{BMO}^2-\|u_0\|_{L^2}^2\leq \|u(t)\|^2_{BMO}\leq \|u_0\|_{BMO}^2+\|u_0\|_{L^2}^2.\]
In addition, for each $t\in \mathbb{R}$, the mapping $u_0\mapsto u(t)$ is Lipschitz on the ball $\mathcal{B}_{BMO}(R):=\{v\in BMO_+(\mathbb{T})\mid \|v\|_{BMO}\leq R\}$ endowed with the $L^2$ topology.
\end{prop}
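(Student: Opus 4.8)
The plan is to adapt the construction of \cite{GKoch}: regularise $u_0$ into smooth data whose $BMO$ norms remain $\leq\|u_0\|_{BMO}$, propagate with the smooth flow of Lemma~\ref{lisse}, and pass to the limit using Lemma~\ref{BMO-bound} (uniform $BMO$, hence $L^p$ for $p<\infty$, control) together with Lemma~\ref{lipsch} (an $L^2$ Cauchy estimate depending only on $\|u_0\|_{BMO}$). For the regularisation I would use dilations: viewing $u_0\in BMO_+$ as a holomorphic function on $\mathbb{D}$, set $u_0^{(r)}(z):=u_0(rz)$ for $r\in(0,1)$, so that $u_0^{(r)}=P_r*u_0$ with $P_r\geq 0$ the Poisson kernel, $\int_{\mathbb{T}}P_r=1$. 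Then $u_0^{(r)}$ extends holomorphically across $\mathbb{T}$, hence lies in $H^s_+$ for every $s>\frac{1}{2}$; one has $u_0^{(r)}\to u_0$ in $L^2$ as $r\to 1^-$ (immediate on Fourier coefficients); and $\|u_0^{(r)}\|_{BMO}\leq\|u_0\|_{BMO}$, since $\Pi$ commutes with convolution and $b\mapsto P_r*b$ is a contraction of $L^\infty$.

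\textbf{The limiting procedure.} Let $u^{(r)}$ denote the smooth solution of \eqref{quad} with datum $u_0^{(r)}$. Lemma~\ref{BMO-bound} gives $\|u^{(r)}(t)\|_{BMO}^2\leq\|u_0\|_{BMO}^2+\|u_0\|_{L^2}^2$ uniformly in $r$ and $t$, so $(u^{(r)})_r$ is bounded in $C(\mathbb{R},L^p)$ for every $p<\infty$. Lemma~\ref{lipsch} gives $\|u^{(r)}(t)-u^{(r')}(t)\|_{L^2}\leq e^{B|t|}\|u_0^{(r)}-u_0^{(r')}\|_{L^2}$ with $B=B(\|u_0\|_{BMO})$, so $(u^{(r)})_r$ is $L^2$-Cauchy uniformly on compact time intervals and converges to some $u\in C(\mathbb{R},L^2_+)$; interpolating with the uniform $L^p$ bounds, $u^{(r)}\to u$ in $C([-T,T],L^q)$ for all $q<\infty$, $T>0$. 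This lets one pass to the limit in every term of the Duhamel identity
\[ u^{(r)}(t)=u_0^{(r)}-i\int_0^t\bigl(2J_{u^{(r)}}\,\Pi(|u^{(r)}|^2)+\overline{J_{u^{(r)}}}\,(u^{(r)})^2\bigr)\,ds, \]
recalling that $w\mapsto J_w=(w^2\vert w)$ is continuous on $L^3$; hence $u$ solves \eqref{quad} in integrated form, and in fact $u\in C^1(\mathbb{R},L^2_+)$ since the right-hand side of \eqref{quad} is continuous in $t$ with values in $L^2$. As $t\mapsto u(t)$ is $L^2$-continuous with values in a fixed $BMO$-ball, on which the weak-$*$ topology is coarser than the $L^2$ one, we get $u\in C_{w*}(\mathbb{R},BMO_+)$; and weak-$*$ lower semicontinuity of $\|\cdot\|_{BMO}$ (applied to $u^{(r)}(t)\to u(t)$, which holds weak-$*$ by the same remark) yields $\|u(t)\|_{BMO}^2\leq\|u_0\|_{BMO}^2+\|u_0\|_{L^2}^2$.

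\textbf{Uniqueness, Lipschitz dependence, lower bound.} It remains to prove uniqueness, Lipschitz dependence, and the \emph{lower} $BMO$ bound. Any $u\in C(\mathbb{R},L^2_+)\cap C_{w*}(\mathbb{R},BMO_+)$ is bounded in $BMO$ — hence in every $L^p$ — on compact time intervals, since a weak-$*$ continuous curve has weak-$*$ compact, therefore bounded, image on a compact interval; so the differential computation in the proof of Lemma~\ref{lipsch} is legitimate for it (the self-adjoint Toeplitz contribution, whose symbol is real-valued, is killed by $\im$, and the surviving terms are controlled by $L^4$, $L^3$ and $BMO$ norms), and Gronwall forces two solutions with the same datum to coincide. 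This gives the group law $\Phi_{t+s}=\Phi_t\circ\Phi_s$ for the flow $\Phi_t:u_0\mapsto u(t)$, so every $\Phi_t$ is a bijection of $BMO_+$ with inverse $\Phi_{-t}$. The Lipschitz estimate follows by applying the same $L^2$ computation directly to two constructed solutions with data in $\mathcal{B}_{BMO}(R)$, whose $BMO$ norms stay bounded by a constant depending only on $R$. For the lower bound, I would restore the conservation of $\|K_{u(t)}\|$: testing on trigonometric polynomials $h$ (for which $u^{(r)}(t)\bar h\to u(t)\bar h$ in $L^2$) shows $K_{u^{(r)}(t)}\to K_{u(t)}$ in the strong operator topology, with uniform operator bounds; hence, using Corollary~\ref{equivK} and $\|K_{u_0^{(r)}}\|=\|S^*u_0^{(r)}\|_{BMO}\leq\|S^*u_0\|_{BMO}=\|K_{u_0}\|$ (because $S^*u_0^{(r)}=r\,P_r*(S^*u_0)$),
\[ \|K_{u(t)}\|\leq\liminf_{r\to1}\|K_{u^{(r)}(t)}\|=\liminf_{r\to1}\|K_{u_0^{(r)}}\|\leq\|K_{u_0}\| \]
for every $t\in\mathbb{R}$. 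Applying this to $\Phi_{-t}$ in place of $\Phi_t$ gives the reverse inequality, so $\|S^*u(t)\|_{BMO}=\|K_{u(t)}\|$ is conserved, and $\|u(t)\|_{BMO}^2\geq\|u_0\|_{BMO}^2-\|u_0\|_{L^2}^2$ follows exactly as in the proof of Lemma~\ref{BMO-bound}.

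\textbf{Main obstacle.} The $L^2$-compactness and the limiting procedure in the nonlinearity are routine. The genuinely delicate point is the two-sided $BMO$ bound for the \emph{limit} solution: weak-$*$ semicontinuity supplies only one of the two inequalities, and recovering the other forces one to establish uniqueness first — in order to have the flow/group structure available — and then to argue through strong-operator-topology semicontinuity of $u\mapsto K_u$, operator-norm continuity in the symbol being false. Keeping the $BMO$ norm of the approximants under control (the reason for preferring dilations to, say, Fourier truncations) is a minor but essential ingredient.
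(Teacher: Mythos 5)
Your proposal is correct and follows essentially the same route as the paper: Poisson-kernel regularisation of $u_0$, the uniform $BMO$ bound of Lemma~\ref{BMO-bound} together with the $L^2$ Lipschitz estimate of Lemma~\ref{lipsch} to extract a limit and pass to the limit in the Duhamel formula, and uniqueness via the extension of the Lemma~\ref{lipsch} computation to solutions in $C(\mathbb{R},L^2_+)\cap C_{w*}(\mathbb{R},BMO_+)$. The only point of divergence is the lower $BMO$ bound, where you re-derive the conservation of $\|K_{u(t)}\|$ through strong-operator-topology semicontinuity before reversing time, whereas the paper reaches the same conclusion more directly by applying the already-established upper bound to the solution restarted at $u(t)$ and evaluated at time $-t$; both variants rest on the same key ingredient, namely uniqueness and the resulting group property of the flow.
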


\begin{proof}[Proof.]
Let $u_0\in BMO_+$. We first construct a sequence of smooth functions $u_0^n$ such that
\begin{gather*}
\|u_0^n-u_0\|_{L^2}\longrightarrow 0,\\
\|u_0^n\|_{BMO}\leq \|u_0\|_{BMO}, \quad \forall n\in\mathbb{N}.
\end{gather*}
To do so, let $r<1$, and denote by $P_r(x)=\sum_{n\in\mathbb{Z}}r^{|n|}e^{inx}$ the Poisson kernel. For any $v\in BMO_+$, we have $\|P_r\ast v\|_{L^2}\leq \|v\|_{L^2}$. In addition, $P_r\ast v\in C^\infty_+$, and satisfies $\|P_r\ast v-v\|_{L^2}\to 0$ as $r\to 1^-$. Finally, for $h\in L^2_+$, we have
\[ H_{P_r\ast v}(h)=P_r\ast (H_v(P_r\ast h)),\]
which proves that $\|P_r\ast v\|_{BMO}\leq \|v\|_{BMO}$. Thus we can choose $u_0^n=P_{r_n}\ast u_0$, where $\{r_n\}$ is any sequence of elements of $(0,1)$ converging to $1$.

Now, we denote by $u^n$ the smooth solution of \eqref{quad} such that $u^n(0)=u^n_0$. Fix $T>0$. By Lemma \ref{lipsch}, the sequence $\{u^n\}$ is a Cauchy sequence in $C([-T,T],L^2_+)$, so $\{u^n\}$ converges locally uniformly to a function $u\in C(\mathbb{R},L^2_+)$. In fact, $\{u^n\}$ also converges locally uniformly in any $L^p_+$ (for $2<p<\infty$), since
\[\begin{aligned} \sup_{t\in [-T,T]} \|u^n(t)-u^m(t)\|_{L^p}&\leq \sup_{t\in [-T,T]} \|u^n(t)-u^m(t)\|_{L^2}^{\frac{1}{p}}\|u^n(t)-u^m(t)\|_{L^{2p-2}}^{1-\frac{1}{p}}\\
&\leq C(\|u_0\|_{BMO})e^{CT/p}\|u^n_0-u^m_0\|_{L^2}^{\frac{1}{p}}.
\end{aligned}\]
This allows to pass to the limit in the following expression :
\begin{equation*}
u^n(T)=u^n_0-i\int_0^T \left( 2J_{u^n(s)}\Pi(|u^n(s)|^2)+\overline{J_{u^n(s)}}(u^n(s))^2\right) ds,
\end{equation*}
so that $u$ is a solution of \eqref{quad}. Furthermore, if $t\in\mathbb{R}$ is fixed, then $\forall n\in\mathbb{N}$, $\|u^n(t)\|_{BMO}^2\leq \|u^n_0\|_{BMO}^2+\|u^n_0\|_{L^2}^2\leq \|u_0\|_{BMO}^2+\|u_0\|_{L^2}^2$. Hence, $\{u^n(t)\}$ is a bounded sequence in $BMO_+$, and any of its weak-$*$ cluster point must be $u(t)$, so $u(t)\in BMO_+$ and $u^n(t)\overset{\ast}{\rightharpoonup} u(t)$ as $n\to \infty$. By the principle of uniform boundedness, $\|u(t)\|^2_{BMO}\leq \limsup \|u^n(t)\|^2_{BMO}\leq\|u_0\|_{BMO}^2+\|u_0\|_{L^2}^2$. Finally, the weak-$*$ continuity of $u$ is easy to check, now that we have a uniform bound for the $BMO$ norm of $u$.

The next step consists in proving uniqueness of solutions in $X:=C(\mathbb{R}, L^2_+)\cap C_{w*}(\mathbb{R},BMO_+)$ via an extension of Lemma \ref{lipsch} to non-smooth solutions. This will also imply that the mapping $u_0\mapsto u(t)$ is Lipschitz on balls of $BMO_+$. So let $u_0\in BMO_+$, and let $u$ be the solution of \eqref{quad} constructed above. Suppose that $v$ is another solution starting from $u_0$ in $X$, in the sense that it satisfies $v(0)=u_0$ and $\forall t\in\mathbb{R}$,
\begin{equation}\label{sol-faible}
v(t)=u_0-i\int_0^t \left( 2J_{v(s)}\Pi(|v(s)|^2)+\overline{J_{v(s)}}(v(s))^2\right) ds.
\end{equation}
Fix $T>0$, and let us show that $u=v$ on $[-T,T]$. Observe that $v$ is locally strongly bounded in $BMO$, so the mapping $[-T,T]\to L^2_+$, $t\mapsto 2J_{v(t)}\Pi(|v(t)|^2)+\overline{J_{v(t)}}(v(t))^2$ is (strongly) continuous. So \eqref{sol-faible} can be differentiated, and the same holds for $u$. Thus, we can differentiate $f(t):=\|u(t)-v(t)\|_{L^2}^2$. The rest of the argument of the proof of Lemma \ref{lipsch} remains valid (in particular, the formula for $f'(t)$ is still true), so $f(t)\leq f(0)e^{B|t|}$ on $[-T,T]$, which proves that $u=v$ on $[-T,T]$, therefore on $\mathbb{R}$.

As a consequence, we know that any solution of \eqref{quad} in $X$ is globally bounded in $BMO_+$, so if $u$ and $v$ are two such solutions, we do have a constant $B>0$ only depending on $\|u(0)\|_{BMO}$ and $\|v(0)\|_{BMO}$ such that \eqref{l2-lip} holds for all time.

The last consequence of uniqueness of solutions is the generalization of Lemma \ref{BMO-bound} to solutions of \eqref{quad} in $X$. Indeed, from the above construction, we know that for such solutions the $L^2$ norm is conserved. Besides, for any $t\in\mathbb{R}$, we have shown that $\|u(t)\|^2_{BMO}\leq \|u_0\|_{BMO}^2+\|u_0\|_{L^2}^2$. Now restart a new solution in $X$ whose initial value is $u(t)$, and call it $v$. Uniqueness implies that for all $s\in\mathbb{R}$, $v(s)=u(t+s)$. Hence $\|v(-t)\|^2_{BMO}=\|u_0\|^2_{BMO}\leq \|v(0)\|^2_{BMO}+\|v(0)\|^2_{L^2}=\|u(t)\|^2_{BMO}+\|u_0\|^2_{L^2}$. This finishes to prove that
\[ \forall t\in\mathbb{R}, \quad \left| \|u(t)\|^2_{BMO}-\|u_0\|^2_{BMO}\right| \leq \|u_0\|_{L^2}^2.\]
\end{proof}

The stability properties of the flow of \eqref{quad} on $BMO_+$ can be deduced from the extended version of Lemma \ref{lipsch}.

\begin{prop}[Propagation of low regularity]\label{propa}
Let $0<s<\frac{1}{2}$. Let $u_0\in BMO_+\cap H^s$ and $s'\in (0,s]$. Then the solution $u$ of \eqref{quad} such that $u(0)=u_0$ satisfies
\[ \forall t\in\mathbb{R}, \quad \|u(t)\|_{H^{s'}}\leq \|u_0\|_{H^{s'}}e^{B|t|},\]
where $B$ only depends on $\|u_0\|_{BMO}$ and on $s$. In particular, $u$ stays in $H^s$ for all time.
\end{prop}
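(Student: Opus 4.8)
The plan is to reduce the claim to an \emph{a priori} bound for smooth solutions, then to push it through the very approximation scheme used to build the $BMO_+$ flow, and to obtain that bound by a Gronwall argument whose only delicate ingredient is a product estimate at regularity $s'<\tfrac12$. For the reduction, given $u_0\in BMO_+\cap H^s$ I would take $u_0^n:=P_{r_n}\ast u_0$ as in the construction of the flow: since the Fourier multiplier $P_{r_n}\ast\,$ has symbol bounded by $1$, one has, besides $\|u_0^n-u_0\|_{L^2}\to0$ and $\|u_0^n\|_{BMO}\le\|u_0\|_{BMO}$, also $\|u_0^n\|_{H^{s'}}\le\|u_0\|_{H^{s'}}$ and $u_0^n\to u_0$ in $H^s$. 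If $\|u^n(t)\|_{H^{s'}}\le\|u_0^n\|_{H^{s'}}e^{B|t|}$ is established for the smooth solutions $u^n$, with $B$ depending only on $\|u_0\|_{BMO}$ (and $s'$), then $\{u^n(t)\}$ is bounded in $H^{s'}$ and in $H^s$ while $u^n(t)\to u(t)$ in $L^2$; weak compactness of balls of $H^{s'}$ (resp. $H^s$) plus uniqueness of the limit in $\mathcal{D}'$ gives $u(t)\in H^{s'}$ with $\|u(t)\|_{H^{s'}}\le\liminf_n\|u^n(t)\|_{H^{s'}}$, and likewise $u$ stays in $H^s$.

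For a smooth solution write $i\dot u=F(u):=2J\Pi(|u|^2)+\bar J u^2$ and compute, $\langle D\rangle^{s'}$ being self-adjoint,
\[ \frac{d}{dt}\|u\|_{H^{s'}}^2=2\re\big(\langle D\rangle^{s'}\dot u\,\big|\,\langle D\rangle^{s'}u\big)=2\im\big(F(u)\,\big|\,\langle D\rangle^{2s'}u\big),\]
hence $\big|\tfrac{d}{dt}\|u\|_{H^{s'}}^2\big|\le 2|J|\big(2\|\Pi(|u|^2)\|_{H^{s'}}+\|u^2\|_{H^{s'}}\big)\|u\|_{H^{s'}}$. Here $|J|\le\|u\|_{L^3}^3\le C\|u\|_{BMO}^3$, and by Lemma~\ref{BMO-bound} (together with $\|v\|_{L^2}=\|H_v(1)\|_{L^2}\le\|v\|_{BMO}$ for $v\in BMO_+$) this is bounded along the flow by a quantity depending only on $\|u_0\|_{BMO}$. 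So everything reduces to the bilinear estimate, for $0<s'<\tfrac12$,
\begin{equation}\label{eq:kp}
\|u^2\|_{H^{s'}}+\|\Pi(|u|^2)\|_{H^{s'}}\le C_{s'}\,\|u\|_{BMO}\,\|u\|_{H^{s'}}.
\end{equation}
Granting \eqref{eq:kp} one gets $\big|\tfrac{d}{dt}\|u\|_{H^{s'}}^2\big|\le B\|u\|_{H^{s'}}^2$ with $B=B(s',\|u_0\|_{BMO})$, and Gronwall's lemma closes this step; combined with the reduction above this proves the Proposition (the dependence on $s$ rather than $s'$ in the statement, for $s'<s$, being recovered afterwards by interpolating with the conserved $L^2$ norm and the bound at exponent $s$).

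The estimate \eqref{eq:kp} is the main obstacle, precisely because for $s'<\tfrac12$ the space $H^{s'}$ is not an algebra and $u$ is merely in $BMO$, not $L^\infty$. I would prove it as a Kato–Ponce (fractional Leibniz) inequality with $BMO$ endpoint, via the Littlewood–Paley decomposition $fg=\pi_fg+\pi_gf+R(f,g)$: the two low–high paraproducts are handled by the boundedness of Bony's paraproduct, $\|\pi_fg\|_{L^2}\lesssim\|f\|_{BMO}\|g\|_{L^2}$, plus the fact that $[\langle D\rangle^{s'},\pi_f]$ is of lower order; the resonant term $R(f,g)=\sum_j\Delta_jf\,\widetilde\Delta_jg$ is handled using the embedding $BMO\hookrightarrow B^0_{\infty,\infty}$, i.e. $\sup_j\|\Delta_jf\|_{L^\infty}\lesssim\|f\|_{BMO}$ (it is exactly this that lets one avoid the lossy Bernstein bound $\|\Delta_jf\|_{L^4}\lesssim 2^{j/4}\|\Delta_jf\|_{L^2}$, whose loss is saturated by Dirichlet-kernel-type blocks — precisely the configurations where the $BMO$ norm is itself large), together with $s'>0$ to sum the geometric series $\sum_{j\ge k}2^{-(j-k)s'}$; the Szeg\H{o} projector only helps, being bounded on $H^{s'}$. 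One can streamline matters by not expanding the pairing: writing $F(u)=A_u(u)+J\Pi(|u|^2)$ with $A_u=T_{\bar J u}+T_{J\bar u}$ (as in Theorem~\ref{lax_thm}) self-adjoint, the contribution of $A_u(u)$ reduces, modulo the commutator $[\langle D\rangle^{s'},A_u]$, to $\im\big(A_u\langle D\rangle^{s'}u\,\big|\,\langle D\rangle^{s'}u\big)=0$, leaving only commutator- and resonant-type contributions to estimate; the harmonic-analytic input is the same, and once it is secured the proof is complete.
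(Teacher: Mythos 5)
Your overall architecture (approximate by $P_{r_n}\ast u_0$, prove an a priori bound for smooth solutions, pass to the limit by weak compactness) is sound, and the reduction to the bilinear estimate \eqref{eq:kp} is correctly organized. But \eqref{eq:kp} is precisely where the argument does not close, and the lemma you invoke to prove it is false as stated. The bound $\|\pi_f g\|_{L^2}\lesssim \|f\|_{BMO}\|g\|_{L^2}$ holds for the \emph{high--low} paraproduct $\sum_j \Delta_j f\, S_{j-1}g$ (by the Carleson embedding theorem), but \emph{not} for the low--high paraproduct $T_fg=\sum_j S_{j-1}f\,\Delta_j g$, which is the dangerous term here: for $f\in BMO$ one only has $\|S_{j-1}f\|_{L^\infty}\lesssim j\,\|f\|_{BMO}$, and this logarithmic loss is genuine. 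Concretely, take $f(x)=\log|1-e^{ix}|$ and $g_j=e^{i2^jx}\chi_j$ with $\chi_j$ a bump supported where $|x|\sim 2^{-j}$; then $S_{j-1}f\approx -j$ on $\supp \chi_j$, so $\|T_f g_j\|_{L^2}\gtrsim j\|g_j\|_{L^2}$, and superposing such blocks shows that $\|T_fg\|_{H^{s'}}\lesssim \|f\|_{BMO}\|g\|_{H^{s'}}$ fails with a uniform constant. The same obstruction defeats the term-by-term estimate $\sum_j 2^{2js'}\|S_{j-1}u\,\Delta_j u\|_{L^2}^2$ in your resonant-plus-paraproduct scheme, and the claim that $[\langle D\rangle^{s'},\pi_f]$ is of lower order also requires positive regularity of $f$ (e.g. $f\in B^{\varepsilon}_{\infty,\infty}$), which $BMO$ does not provide. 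A $BMO$-endpoint fractional Leibniz rule of the type \eqref{eq:kp} may well be available in the literature (D.~Li's work on Kato--Ponce inequalities is the natural reference), but it is a delicate result, it is not obtained by the paraproduct bounds you cite, and as written your proof of the key step is broken.

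The paper avoids this issue entirely and the contrast is instructive. Its proof of Proposition \ref{propa} is two lines: by translation invariance, $\tau_y u$ is the solution issued from $\tau_y u_0$, so the $L^2$-Lipschitz estimate of Lemma \ref{lipsch} (whose proof only ever pairs the differential of the nonlinearity against $u-v$ in $L^2$, where the $BMO$ bound enters through $\|H_w\|=\|w\|_{BMO}$ and $L^p$ embeddings --- no product estimate in $H^{s'}$ is needed) gives $\|(u-\tau_yu)(t)\|_{L^2}\leq e^{B|t|}\|u_0-\tau_yu_0\|_{L^2}$; one then divides by $|y|^{1+2s'}$ and integrates in $y$, using the characterization \eqref{Hs-equiv} of $H^{s'}$ for $0<s'<1$. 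This yields exactly the stated bound $\|u(t)\|_{H^{s'}}\leq \|u_0\|_{H^{s'}}e^{B|t|}$ with $B$ depending only on $\|u_0\|_{BMO}$. If you want to salvage your energy-method route, you must either prove \eqref{eq:kp} honestly (handling the low--high interaction by a mechanism that does not reduce to $L^\infty$ control of $S_{j-1}u$) or, more simply, replace it by the translation argument above, which is already available from the lemmas you quote.
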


\begin{rem}
As recalled in the introduction, such a result is not known in the case of the cubic Szeg\H{o} equation. From \cite[Theorem 3]{GKoch}, it is not clear whether additional regularity propagates or not : at least, it cannot shrink more than exponentially fast.
\end{rem}

\begin{proof}[Proof of Proposition \ref{propa}]
For $u\in BMO_+$ and $y\in \mathbb{R}$, we denote by $\tau_y u$ the function given by $\tau_y u(e^{ix})=u(e^{i(x-y)})$. From the invariances of the equation, if $t\mapsto u(t)$ is a $BMO$ solution to \eqref{quad} starting from $u_0$, then $t\mapsto \tau_y u(t)$ is the solution starting from $\tau_y u_0$. Thus we can apply inequality \eqref{l2-lip} to $u$ and $\tau_y u$ : for all $t\in \mathbb{R}$,
\[ \|(u-\tau_yu)(t)\|_{L^2}\leq e^{B|t|}\|u_0-\tau_yu_0\|_{L^2}.\]
As for any $v\in H^s$,
\begin{equation}\label{Hs-equiv}
\|v\|^2_{H^s}\simeq \|v\|^2_{L^2}+ \int_0^1 \frac{\|v-\tau_yv\|_{L^2}^2}{|y|^{1+2s}}dy,
\end{equation}
it suffices to integrate the preceding inequality in $y$, and we get the result.
\end{proof}

\begin{rem}
Proposition \ref{propa} can be extended to other Besov spaces $B^s_{2,q}$, for $q\in [1,+\infty)$, since we have
\[ \|v\|_{B^s_{2,q}}^q\simeq \|v\|_{L^2}^q+ \int_0^1 \frac{\|v-\tau_yv\|_{L^2}^q}{|y|^{1+qs}}dy.\] 
\end{rem}

\vspace{1em}
A last consequence of \eqref{l2-lip} is that it enables to get the full picture of the maximal growth rate of Sobolev norms of smooth solutions of \eqref{quad}.

\begin{prop}
Let $\frac{1}{2}<s<1$, $u_0\in H^s_+(\mathbb{T})$, and $u$ the solution of \eqref{quad} such that $u(0)=u_0$. Then for any $s'\in (\frac{1}{2},s]$, the $H^{s'}$ norm of $u$ grows at most exponentially in time.
\end{prop}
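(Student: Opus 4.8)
The plan is to transcribe the proof of Proposition \ref{propa} almost verbatim, the two changes being that we now work with the smooth solution furnished by Lemma \ref{lisse} (legitimate since $s>\frac{1}{2}$) and that we use the exponent $s'\in(\frac{1}{2},s]$ in the Gagliardo-type characterization \eqref{Hs-equiv}. That identity is valid for every exponent in $(0,1)$, which is exactly why the statement is confined to $s<1$ (so that $s'<1$).

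First, since $s>\frac{1}{2}$, Lemma \ref{lisse} provides a unique $u\in C^\infty(\mathbb{R},H^s_+)$. Moreover $u_0\in H^s_+\subset BMO_+$, with $\|u_0\|_{BMO}=\|H_{u_0}\|\leq\|u_0\|_{H^{1/2}}$ by Nehari's theorem and the elementary bound on $\|H_u\|$; and since $Q+M$ is conserved along smooth solutions while $(Q+M)(u)\simeq\|u\|_{H^{1/2}}^2$, the quantity $\|u(t)\|_{H^{1/2}}$ stays bounded in time by a constant depending only on $u_0$.

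Next, fix $y\in\mathbb{R}$ and let $\tau_y u$ be the translate. By the translation invariance of \eqref{quad}, $\tau_y u$ is the smooth solution issued from $\tau_y u_0$, and $\|\tau_y u_0\|_{BMO}=\|u_0\|_{BMO}$, $\|\tau_y u_0\|_{L^2}=\|u_0\|_{L^2}$. Applying Lemma \ref{lipsch} to the pair $(u,\tau_y u)$ yields a constant $B>0$, depending only on $\|u_0\|_{BMO}$, such that
\[ \|(u-\tau_y u)(t)\|_{L^2}\leq e^{B|t|}\,\|u_0-\tau_y u_0\|_{L^2},\qquad\forall t\in\mathbb{R}.\]

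Finally, apply \eqref{Hs-equiv} with exponent $s'\in(\frac{1}{2},1)$ to $v=u(t)$; using that $\|u(t)\|_{L^2}=\|u_0\|_{L^2}$ is conserved and integrating the previous inequality against $|y|^{-1-2s'}\,dy$ on $(0,1)$, we obtain
\[ \|u(t)\|_{H^{s'}}^2\leq C\left(\|u_0\|_{L^2}^2+e^{2B|t|}\int_0^1\frac{\|u_0-\tau_y u_0\|_{L^2}^2}{|y|^{1+2s'}}\,dy\right)\leq C'e^{2B|t|}\,\|u_0\|_{H^{s'}}^2,\]
hence $\|u(t)\|_{H^{s'}}\leq Ce^{B|t|}$. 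I do not anticipate any real difficulty: the argument is a direct copy of Proposition \ref{propa}, the only point requiring attention being the range of validity of \eqref{Hs-equiv}, and the one simplification being that \eqref{l2-lip} now applies unconditionally because a smooth solution automatically lies in $BMO_+$ as soon as $s>\frac{1}{2}$.
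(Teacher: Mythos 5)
Your proposal is correct and is exactly the paper's argument: the paper's own proof is the single remark that ``the proof is the same as for Proposition \ref{propa}, since \eqref{Hs-equiv} holds for any $s\in(0,1)$,'' and you have simply unfolded that transcription, including the correct observations that $H^s_+\subset BMO_+$ for $s>\frac{1}{2}$ and that $\|\tau_y u_0\|_{BMO}=\|u_0\|_{BMO}$ so the constant $B$ from Lemma \ref{lipsch} depends only on $u_0$.
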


\begin{proof}[Proof.]
The proof is the same as for proposition \ref{propa}, since \eqref{Hs-equiv} holds for any $s\in (0,1)$.
\end{proof}

\vspace*{2em}
\section{Turbulent trajectories : an explicit computation in \texorpdfstring{$\mathcal{L}(1)$}{L1}}\label{turbu-section}

In this section, we follow the intuition of \cite{Xu} : we study the flow of \eqref{quad} on the manifold $\mathcal{L}(1)$, and we find the necessary and sufficient condition for weak turbulence to occur as stated in Proposition \ref{blowup}.

In view of Proposition \ref{frac}, any element $u\in\mathcal{L}(1)$ can be written as
\begin{equation}\label{L1} u(z)= b+\frac{cz}{1-pz}, \quad z\in\mathbb{D},
\end{equation}
for some $b$, $c$, $p\in\mathbb{C}$ satisfying also $|p|<1$ and $c\neq 0$. Let us rephrase the evolution of \eqref{quad} in these coordinates $(b,c,p)$.

If $u$ is of the form \eqref{L1},
\[ J=\int_\mathbb{T}|u|^2u =\frac{1}{2\pi}\int_0^{2\pi} \left( b+\frac{ce^{ix}}{1-pe^{ix}} \right) ^2\left( \bar{b}+\frac{\bar{c}}{e^{ix}-\bar{p}}\right) dx,\]
and by the residue theorem,
\begin{equation}\label{L1_J}
J=|b|^2b+\frac{2b|c|^2}{1-|p|^2}+\frac{|c|^2c\bar{p}}{(1-|p|^2)^2}.
\end{equation}
Similarly, we compute the three main conservation laws :
\begin{gather*}
Q=|b|^2+\frac{|c|^2}{1-|p|^2}, \\
M=\frac{|c|^2}{(1-|p|^2)^2}, \\
E=\frac{1}{2}|b|^6+2|b|^4\frac{|c|^2}{1-|p|^2}+\frac{|b|^2|c|^2}{(1-|p|^2)^2}\left( \re  (b\bar{c}p) +2|c|^2 \right)+2\frac{|c|^4}{(1-|p|^2)^3}\re (b\bar{c}p) +\frac{1}{2}\frac{|p|^2|c|^6}{(1-|p|^2)^4}.
\end{gather*}

By a partial fraction decomposition, we also know that
\[ \Pi (|u|^2)=\Pi \left( \left( b+\frac{cz}{1-pz} \right) \left( \bar{b}+\frac{\bar{c}}{z-\bar{p}}\right)\right) = |b|^2+\frac{|c|^2}{1-|p|^2}+\left( \frac{p|c|^2}{1-|p|^2}+\bar{b}c\right) \frac{z}{1-pz},\]
so that equation \eqref{quad} on $\mathcal{L}(1)$ finally reads
\[\left\lbrace
\begin{aligned}
i\dot{p} &=c\bar{J},\\
i\dot{c} &=2bc\bar{J}+2\bar{b}cJ + \frac{2Jp|c|^2}{1-|p|^2},\\
i\dot{b} &=b^2\bar{J}+2|b|^2J+\frac{2J|c|^2}{1-|p|^2},
\end{aligned}\right.
\]
with $J$ given by \eqref{L1_J}.

\vspace{1em}
Now we can turn to the
\begin{proof}[Proof of Proposition \ref{blowup}]
We begin by proving that \textit{(i)} implies \textit{(iii)}. Suppose that there exists $\underline{s}>\frac{1}{2}$, as well as a sequence of times $\{t_n\}_{n\in\mathbb{N}}$, such that $\|u(t_n)\|_{H^{\underline{s}}} \to \infty$ as $n\to \infty$. In terms of coordinates $(b,c,p)$, since $|b|$ is bounded (by $Q$), this means that
\[ \sum_{k=1}^\infty |c(t_n)|^2|p(t_n)|^{2k} (1+k^2)^{\underline{s}} \underset{n\to+\infty}{\longrightarrow} \infty.\]
Since $|c|$ is a bounded function (by $\sqrt{M}$, for instance), we see that necessarily, $|p(t_n)|\to 1$ when $n\to \infty$. Now, in view of the expression of the momentum, $|c(t_n)|=\sqrt{M}\cdot (1-|p(t_n)|^2)$ goes to $0$. As a consequence, writing
\[ Q=|b(t_n)|^2+\sqrt{M}|c(t_n)|,\]
we see that $|b(t_n)|^2\to Q$ as $n\to\infty$. If we move on to the energy, we get
\[ E=\frac{1}{2}|J(t_n)|^2= \frac{1}{2}\left| |b(t_n)|^2b(t_n)+2\sqrt{M}b(t_n)|c(t_n)|+Mc(t_n)\overline{p(t_n)} \right|^2,\]
so that $\frac{1}{2}|b(t_n)|^6\to E$ as $n\to\infty$. Hence $E=\frac{1}{2}Q^3$.

To conclude the proof, it suffices to show that \textit{(iii)} implies \textit{(ii)}, so we assume \eqref{res}. Developing the expression of $Q$, we get in $(b,c,p)$ coordinates :
\[ |b|^4+\frac{|b|^2|c|^2}{1-|p|^2}+2\re \left( \frac{b\bar{c}p}{1-|p|^2}\right) \left(|b|^2+\frac{2|c|^2}{1-|p|^2}\right) = \frac{|c|^4(1-2|p|^2)}{(1-|p|^2)^3},\]
or equivalently, using the conservation laws :
\begin{equation}\label{re_bcp}
Q(Q-|c|\sqrt{M})+ 2(Q+|c|\sqrt{M}) \re \left( \frac{b\bar{c}p}{1-|p|^2}\right) =2|c|^2M-|c|M\sqrt{M}.
\end{equation}
We would like to show that $|p(t)|\to 1$, or that $|c(t)|\to 0$ as $t\to +\infty$ (the negative times are treated in the same way). First of all, notice that $c$ never cancels out, because $u$ must stay in $\mathcal{L}(1)$. Consequently, $t\mapsto |c(t)|$ is a smooth function of time.

Let us compute, from the equation on $c$,
\[\frac{d|c|}{dt}= \frac{1}{|c|}\re (\bar{c}\dot{c})=\frac{2|c|}{1-|p|^2}\left[ |b|^2+\frac{2|c|^2}{1-|p|^2} \right]\im (b\bar{c}p)=2|c|(Q+|c|\sqrt{M})\im \left( \frac{b\bar{c}p}{1-|p|^2}\right) ,\]
so that, by \eqref{re_bcp},
\begin{align*}
\left( \frac{1}{|c|}\frac{d|c|}{dt}\right)^2&=4(Q+|c|\sqrt{M})^2\left[ \im \left(\frac{b\bar{c}p}{1-|p|^2}\right)\right]^2 \\
&=4(Q+|c|\sqrt{M})^2\frac{|b|^2|c|^2|p|^2}{(1-|p|^2)^2} - \left( 2|c|^2M-|c|M\sqrt{M} -Q(Q-|c|\sqrt{M}) \right)^2 \\
&=4(Q+|c|\sqrt{M})^2(Q-|c|\sqrt{M})(M-|c|\sqrt{M}) - (2M|c|^2+\sqrt{M}(Q-M)|c|-Q^2)^2
\end{align*}
Expanding this polynomial in $|c|$, one realizes that the terms in $|c|^4$ and $|c|^3$ cancel out. In the end,
\begin{equation}\label{haiyan} \left( \frac{1}{|c|}\frac{d|c|}{dt}\right)^2 = -M(M+Q)^2|c|^2+2Q^2\sqrt{M}(M-Q)|c|+Q^3(4M-Q)=\mathcal{P}(|c|\sqrt{M}),
\end{equation}
where $\mathcal{P}(X):=-(M+Q)^2X^2+2Q^2(M-Q)X+Q^3(4M-Q)$. The discriminant of $\mathcal{P}$ is
\[\Delta = Q^4(M-Q)^2+(M+Q)^2Q^3(4M-Q)= 8M^2Q^4+4M^3Q^3 >0,\]
so $\mathcal{P}$ has two distinct roots :
\[r_\pm := \frac{Q^2(M-Q)\pm 2MQ\sqrt{2Q^2+MQ}}{(M+Q)^2}.\]

Observe that $\mathcal{P}$ takes nonnegative values between $r_-$ and $r_+$ only. In view of the differential equation \eqref{haiyan}, and since $|c|>0$, we know that $\mathcal{P}(x)$ must be nonnegative at least for one $x>0$. Therefore we must have $r_+> 0$, and thus
\[ \begin{aligned}2M& \sqrt{2Q+M}> \sqrt{Q}(Q-M) \\
&\Leftrightarrow (Q\leq M) \text{ or } (8M^2Q+4M^3> Q^3-2Q^2M+QM^2) \\
&\Leftrightarrow (Q\leq M) \text{ or } ((4M-Q)(Q+M)^2 > 0)
\end{aligned}\]
So this proves that as a consequence of \eqref{res}, we must have $Q<4M$.

It follows that $r_-<0$. Indeed, the product $r_+r_-$ is given by to coefficients of $\mathcal{P}$, namely
\[ r_+r_-= \frac{Q^3(4M-Q)}{-(M+Q)^2}<0.\]
Therefore we can factorize $\mathcal{P}(X)=(M+Q)^2(X-r_-)(r_+-X)$, and we find
\[ \left( \frac{d |c|}{dt}\right)^2= (M+Q)^2|c|^2(|c|\sqrt{M}-r_-)(r_+-|c|\sqrt{M}).\]

Suppose that for some time $t_0\in \mathbb{R}$,
\[ \frac{d |c|}{dt}(t_0)= +(M+Q)|c(t_0)|\sqrt{(|c(t_0)|\sqrt{M}+|r_-|)(r_+-|c(t_0)|\sqrt{M})},\]
then $|c|$ increases and the equality holds true until $|c|$ reaches the value $\frac{r_+}{\sqrt{M}}$. This must happen in finite time, since for $t\geq t_0$,
\[ \frac{d|c|}{dt}(t)\geq (M+Q)|c(t_0)|\sqrt{|r_-|}\cdot \sqrt{r_+-|c(t)|\sqrt{M}},\]
so that there exists $K>0$ such that $\frac{d}{dt}\sqrt{r_+-|c(t)|\sqrt{M}}\leq -K$. Without loss of generality, we assume that $|c(t)|=\frac{r_+}{\sqrt{M}}$ at $t=0$. Furthermore,
\[ \frac{d^2|c|}{dt^2}(0)=-\frac{1}{2}\sqrt{M}(M+Q)\frac{r_+^2}{M}(r_++|r_-|)<0,\]
thus $|c|$ must decrease immediately after time $0$. This proves that for all $t\geq 0$,
\[ \frac{d |c|}{dt}= -(M+Q)|c|\sqrt{(|c|\sqrt{M}+|r_-|)(r_+-|c|\sqrt{M})}.\]

It appears that this equation can be integrated. To simplify notations, set $f:=|c|\sqrt{M}$. We then have
\[ \int_{f(0)}^{f(t)} \frac{df}{f\sqrt{f+|r_-|}\sqrt{r_+-f}}=-(M+Q)t. \]
Making the change of variables $y=\sqrt{\frac{r_+-f}{f+|r_-|}}$ in the integral yields, for some constant $C\in\mathbb{R}$,
\[ \frac{1}{\sqrt{|r_-|r_+}}\log \left( \frac{2}{1+\sqrt{\frac{|r_-|}{r_+}}\sqrt{\frac{r_+-f(t)}{f(t)+|r_-|}}} -1\right)=C-(M+Q)t. \]
In view of the coefficients of $\mathcal{P}$, we know that $|r_-|r_+=Q^3(4M-Q)(Q+M)^{-2}$, so we set $\kappa:=Q^{3/2}\sqrt{4M-Q}$, and we finally have
\[ |c(t)|=\frac{f(t)}{\sqrt{M}}=\frac{C'}{|r_-|(1+Ce^{-\kappa t})^2+r_+(1-Ce^{-\kappa t})^2}e^{-\kappa t}.\]
In the end, this proves that $|c(t)|\sim_{t \to +\infty} Ce^{-\kappa t}$. In view of our preliminary remark, we have in fact $|c(t)|\sim_{t \to \pm\infty} Ce^{-\kappa |t|}$. As a consequence, $|p(t)|$ goes to $1$ in both time directions.

It remains to compute the $H^s$ norm of $u$ for $s>\frac{1}{2}$. Expanding $u$ in Fourier series, and noticing again that $b$ is bounded, we only need to estimate the sum
\[ |c(t)|^2 \sum_{k=1}^\infty |p(t)|^{2k} \cdot k^{2s}\]
as $t\to\pm\infty$. It is a classical result that the power series $\sum_{k=1}^\infty x^kk^\alpha$ is equivalent to $C(1-x)^{-(1+\alpha)}$ when $x\to 1^-$. So
\[ \|u(t)\|_{H^s}^2\sim C\frac{|c|^2}{(1-|p|^2)^{1+2s}}=CM^{\frac{1}{2}+s}|c|^{1-2s},\]
\textit{i.e.} $\|u(t)\|_{H^s}^2\sim C_se^{(2s-1)\kappa |t|}$, which was the claim.
\end{proof}

\begin{rem}
The coefficient $\kappa$ we find in the proof of theorem \ref{blowup} is very similar to the one of the a priori estimate of Lemma \ref{h1}. Indeed, because of assumption \eqref{res},
\[ \kappa = Q^{3/2}\sqrt{4M-Q}=\sqrt{2E}\cdot \sqrt{4M-Q}, \]
and the factor $\sqrt{4M-Q}$ is ``not far'' from $\|u_0\|_{H^{1/2}}$.
\end{rem}


\bibliography{mabiblio}
\bibliographystyle{plain}

\vspace{0,5cm}
\textsc{Département de mathématiques et applications, École normale supérieure,
CNRS, PSL Research University, 75005 Paris, France}

\textit{E-mail address: }\texttt{joseph.thirouin@ens.fr}

\end{document}